\documentclass[11pt]{article}
\usepackage{graphicx} 
\usepackage{amsthm, amsmath, amsfonts, amssymb, bbm}
\usepackage{xcolor}
\usepackage{enumitem}
\usepackage{hyperref}

\newcommand{\R}{\mathbb{R}}
\newcommand{\N}{\mathbb{N}}
\newcommand{\Z}{\mathbb{Z}}
\newcommand{\E}{\mathbb{E}}
\newcommand{\Prob}{\mathbb{P}}
\newcommand{\topple}{\Psi}
\newcommand{\toppledconf}{\psi}
\newcommand{\lefteigen}{\mathsf{p}}
\newcommand{\orderp}{\mathsf{w}}
\newcommand{\statep}{\hat{q}}
\newcommand{\configurationp}{\hat{\eta}}
\newcommand{\e}{e}
\newcommand{\thresh}{\textsf{t}}
\newcommand{\inst}{\xi}

\newtheorem{example}{Example}[section]
\newtheorem{lemma}{Lemma}[section]
\newtheorem{prop}{Proposition}[section]
\newtheorem{thm}{Theorem}[section]

\usepackage[left=1in, right=1in, bottom = 1in, top = 1in]{geometry}
\def\namedlabel#1#2{\begingroup
    #2%
    \def\@currentlabel{#2}%
    \phantomsection\label{#1}\endgroup
}

\definecolor{caribbeangreen}{rgb}{0.0, 0.8, 0.6}

\title{Stabilization of stochastic networks in Markovian environment}
\author{Robin Kaiser \footnote{Technische Universität München, Germany. \texttt{ro.kaiser@tum.de}}, Martin Klötzer\footnote{Universität Innsbruck, Austria. \texttt{Martin.Kloetzer@uibk.ac.at}}, Ecaterina Sava-Huss\footnote{Universität Innsbruck, 
Austria. \texttt{Ecaterina.Sava-Huss@uibk.ac.at}} }

\begin{document}

\maketitle
\begin{abstract}
We establish criteria under which stochastic networks in a Markovian environment stabilize, thus confirming \cite[Conjecture 7.2]{levine-greco2023}. The networks evolve on finite connected graphs $G=(V,E)$, and their dynamics are encoded by  $V \times V$ toppling matrices $M$, whose columns record the expected number of topplings when the environment is in stationarity. Stabilization and non-stabilization are characterized by a parameter $\rho$ which depends on the largest eigenvalue of the matrix $M+\alpha I$, with $\alpha=1+\max\{-M(v,v):v\in V\}$. The proofs rely on the toppling random walk, in which toppled vertices are sampled according to the eigenvector associated with the largest eigenvalue of $M$.
\end{abstract}
\textit{2020 Mathematics Subject Classification.} 60J80, 60F05, 60F15.\\
\textit{Keywords:} random walk, toppling, stabilization, phase transition, random environment.

\section{Introduction}
The term \emph{Abelian networks} was coined by Levine \cite{abelian-networks1,abelian-networks2,abelian-networks3,abelian-networks4}, but in physics Abelian networks were introduced by Dhar \cite{Dhar_sandpile1}, who called them \emph{Abelian distributed processors}. Shortly, an Abelian network on a graph $G=(V,E)$  may be seen as a system of communicating automata $(\mathcal{P}_v)_{v\in V}$ indexed by the vertices $V$ of the graph, such that each $\mathcal{P}_v$ satisfies a commutativity condition. 
In \cite{abelian-networks1}, the concept of Abelian networks is characterized via two simple axioms and it is shown that such networks obey a least-action principle, which roughly means that each processor in the network performs the minimum amount of work in order to reach a halting state. In \cite{abelian-networks2}, it is shown that under natural assumptions, finite and irreducible Abelian networks halt for all inputs. In \cite{abelian-networks3}, the critical group of an Abelian network as a generalization of the sandpile group in the Abelian sandpile model is introduced and several of its properties are investigated. The theory of networks that halt on all inputs is extended to non-halting networks in \cite{abelian-networks4}.

Examples of Abelian networks are: Abelian sandpiles \cite{abelian-sandpile-btw, abelian-sandpile-dhar}, bootstrap percolation \cite{bootstrap-hol,bootstrap-ent}, rotor networks \cite{rotor}, oil and water model \cite{oil-and-water}. Allowing the transition function in an Abelian network to depend on a probability space, one obtains a larger class of processes that are called \emph{stochastic Abelian networks} (shortly \emph{stochastic networks}), and which include classical Markov chains, branching Markov chains, internal DLA, excited walks \cite{excited-walk}, activated random walks \cite{RollaSidoravicius12}, locally Markov walks \cite{MR5020587}, and stochastic sandpiles \cite{SidoraviciusTeixeira17} as special examples. Several of these systems of interacting particles can be characterized by an Abelian property
which means that changing the order of certain interactions has no effect on the final state of the system. This property turned out to be very useful when  proving phase transitions for such systems.

In the current work, we consider the stackwise representation of stochastic networks, in which vertices of $G$ communicate with each other by sending particles along adjacent edges. The number of particles sent out is randomly  sampled, and depends on the environment at the current state, while the environment is updated according to a transition matrix. 
This work is motivated by Levine-Greco  \cite{levine-greco2023}, who asked whether the survival/extinction of a multitype branching process in a Markovian environment is determined solely by the largest eigenvalue of the expected offspring distribution matrix. We confirm \cite[Conjecture 7.2]{levine-greco2023} and give a proof that extends beyond multitype branching processes to a broader class of stochastic networks.

For stating the main result, we briefly introduce the model. Let $G=(V,E)$ be a finite connected graph. For every $v\in V$, we consider a finite set of \emph{environment states $S_v$} and a \emph{transition matrix $P_v$} that governs transitions among these states. Let $\pi_v$ be the stationary distribution of $P_v$. The \emph{expected toppling matrix $M:V\times V\to\R$} is defined as
\begin{equation}\label{eq:expected-toppling-matrix}
M(v,w):=\sum_{s\in S_v}\pi_v(s)\mu^{v,s}(w),
\end{equation}
where for $v\in V$ and $s\in S_v$ we define $\mu^{v,s}:V\to\R$ as the expectation of a given probability measure $\nu^{v,s}\in\mathsf{Prob}(\Z^V)$; 
$\nu^{v,s}$ describes how vertex $v$ interacts with the other vertices when the environment at $v$ is in state $s$.
Starting from an initial configuration $\eta_0:V\to \N_0$ and a threshold function $\thresh:V\to\N_0$, 
define a Markov chain $(\eta_n)_{n\in\N}$ as follows: given $\eta_n$, select at random a vertex $v\in V$ with $\eta_n(v)\geq \thresh(v)$, update the environment at $v$ according to $P_v$, and then add to
$\eta_n$ a column sampled from $\nu^{v,s}$, where $s\in S_v$ is the updated environment. We call $(\eta_n)_{n\in\N}$ \emph{a stochastic network in Markovian environment}. The network stabilizes if there exists $n\in\N$ such that  $\eta_n(v)\leq\thresh(v)$ for all $v\in V$. 
We introduce three assumptions, under which the main result hold. The first one \eqref{eq:only-local-reduction} states that we can reduce mass only at the toppled vertex. The second assumption \eqref{eq:bfb} means that we can reduce  mass at the toppled vertex by at most $\thresh(v)$. The last assumption \eqref{eq:irr} requires that there exists $k>0$ such that $(M-\mathsf{Diag}(M))^k>0$ on the off-diagonal entries, where $\mathrm{Diag}(M)$ denotes the matrix that coincides with $M$ on the diagonal and is zero off-diagonal.

\begin{thm}\label{thm:main}
Let $G=(V,E)$ be a finite, connected, directed graph and let $(\eta_n)_{n\in \N}$ be a stochastic network in a Markovian environment on $G$ which satisfies (\ref{eq:only-local-reduction}), (\ref{eq:bfb}) and (\ref{eq:irr}). Let $M$ be the expected offspring matrix as in \eqref{eq:expected-toppling-matrix} and 
$$\alpha=\max\{-M(v,v)\,|\,v\in V\}+1\quad \text{and} \quad
\rho=r(M+\alpha I)-\alpha,$$
 where $r(M+\alpha I)$ is the Perron-Frobenius eigenvalue of $M+\alpha I$.
\begin{enumerate}
\setlength\itemsep{0em}
\item (Subcritical case) If $\rho<0$, then for any initial state $\eta_0:V\to\N_0$, the network stabilizes almost surely.\label{item:1}
\item\label{item:2} (Critical case) If $\rho=0$, then either there exists a conserved quantity, or the network  stabilizes almost surely for every initial state.
\item\label{item:3} (Supercritical case) If $\rho>0$, then there exists an initial state $\eta_0:V\to\N_0$, for which the network does not stabilize with positive probability.
    \end{enumerate}
\end{thm}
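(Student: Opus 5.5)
Our plan is to reduce stabilization to the behaviour of a one-dimensional potential built from the Perron--Frobenius left eigenvector, and then analyse that potential along the \emph{toppling random walk}. Set $A=M+\alpha I$. By the choice of $\alpha$, every diagonal entry of $A$ is at least $1$. Off the diagonal, $A$ agrees with $M$, and these entries are nonnegative by \eqref{eq:only-local-reduction}. Assumption \eqref{eq:irr} then makes $A$ nonnegative and primitive, so Perron--Frobenius gives a strictly positive left eigenvector $\lefteigen$ with $\lefteigen A=r(A)\lefteigen$, that is, $\lefteigen M=\rho\,\lefteigen$.

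For a configuration $\eta$, consider the weighted mass $\langle \lefteigen,\eta\rangle=\sum_v \lefteigen(v)\eta(v)$. Toppling $v$ in environment state $s$ changes this quantity by $\langle\lefteigen,X\rangle$ with $X\sim\nu^{v,s}$. Because of the Markovian environment, the increments are not i.i.d. We will therefore use the Abelian/stack representation. Each vertex carries its own environment chain and its own stack of instructions, and the final outcome does not depend on the order of topplings. After $n_v$ topplings at $v$, the total mass emitted from $v$ is $n_v M(v,\cdot)+o(n_v)$ almost surely, by the ergodic theorem for the finite chain $P_v$. Moreover, by martingale/Poisson-equation arguments for Markov additive processes, the fluctuation is of order $\sqrt{n_v}$.

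\textbf{Subcritical case.} Suppose the network does not stabilize. By the least-action/Abelian property, we may then topple in any legal order, in particular along the toppling random walk, which picks vertices with probabilities proportional to the eigenvector weights whenever they are unstable. If the total toppling counts $N=\sum_v n_v$ are infinite, the weighted mass satisfies
\[
\langle\lefteigen,\eta_0\rangle+\sum_v n_v\,\rho\,\lefteigen(v)+o(N)\to-\infty ,
\]
since $\rho<0$ and $\sum_v n_v\lefteigen(v)\asymp N$. This contradicts $\eta\ge 0$ off the toppling vertex. Assumption \eqref{eq:bfb} guarantees that a toppled vertex never goes negative, so the configuration stays in $\N_0^V$. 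There is one subtlety: some $n_v$ may stay bounded while others diverge. We handle this by restricting the ergodic averages to the set of vertices with $n_v\to\infty$. The contributions of the remaining vertices are bounded, and on that set the off-diagonal entries are nonnegative, so the weighted drift of the set is at most $\rho<0$.

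\textbf{Supercritical case.} Here we pick $\eta_0$ with a huge multiple of the threshold, say $\eta_0=K(\thresh+1)$, and use the toppling random walk. Along it, $\langle\lefteigen,\eta_n\rangle$ has positive drift $\rho$ per weighted toppling. A large-deviation/martingale bound for the Markov additive process then shows that, with positive probability, every coordinate stays above $\thresh$ forever. To get this coordinatewise control, we topple in rounds proportional to $\lefteigen$, so that the configuration grows roughly along the right Perron direction. Positive probability then follows from a union bound over rounds with exponentially small failure probabilities.

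\textbf{Critical case.} If $\rho=0$, then $\langle\lefteigen,\eta\rangle$ is a martingale-like quantity with zero drift. If the increments $\langle\lefteigen,X\rangle$ are almost surely $0$ for all $v,s$, this is the conserved quantity. Otherwise it has nondegenerate variance, and we argue as for recurrent one-dimensional walks. An unstabilized process forces infinitely many topplings, and a zero-drift walk with nonzero variance is unbounded below, so it eventually hits $\langle\lefteigen,\eta\rangle<0$, which is impossible. The step we expect to be the main obstacle is making the Markov-environment fluctuation argument rigorous here. We would need a functional CLT or law of the iterated logarithm for the stack-indexed Markov additive process that is uniform over the random, adaptively chosen toppling counts $(n_v)$. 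We would first try to write the increment as a martingale plus a bounded coboundary via the Poisson equation for each $P_v$, and then apply martingale recurrence.
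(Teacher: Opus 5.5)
Your linear-potential idea can be made to work, but as written it uses the wrong eigenvector, and the critical case has a genuine gap. Since $\lefteigen$ is a \emph{left} eigenvector ($\lefteigen M=\rho\lefteigen$), toppling $v$ changes $\langle\lefteigen,\eta\rangle$ on average by $\sum_w M(v,w)\lefteigen(w)$, not by $\rho\lefteigen(v)$. This can have the wrong sign. Take $M=\begin{pmatrix}-1&2\\ 1/10&-1\end{pmatrix}$. Then $\rho=5^{-1/2}-1<0$ and $\lefteigen\propto(1,2\sqrt5)$, yet toppling vertex $1$ raises $\langle\lefteigen,\eta\rangle$ by $4\sqrt5-1>0$ in expectation. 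So your displayed subcritical bound is false.

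The left eigenvector is the right tool only when vertices are toppled with frequencies $\lefteigen$ regardless of stability, which is the paper's route. The configuration then drifts by $\sum_v\lefteigen(v)M(v,\cdot)=\rho\lefteigen$ coordinatewise, so $\lefteigen$ itself, not the right Perron vector, is the growth direction in your supercritical sketch. Least action then transfers conclusions to legal orders.

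For a potential along arbitrary legal orders you need the right Perron vector $a>0$, $Ma=\rho a$. With that replacement your subcritical argument is correct and shorter than the paper's. The ergodic theorem along each stack gives a random $C_v<\infty$ with $\sum_{j\le n}\langle a,I^v_j\rangle\le\tfrac{\rho}{2}a(v)n+C_v$ for all $n$. So after $t$ legal topplings, $0\le\langle a,\eta_t\rangle\le\langle a,\eta_0\rangle+\sum_vC_v+\tfrac{\rho}{2}\,t\min_va(v)$, using nonnegativity from (BFB) and (MOLI). This bounds the length of every legal sequence, and your ``bounded $n_v$'' subtlety disappears. Your supercritical plan (large initial state, exponential tail bounds, union bound over rounds) is essentially the paper's. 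The paper additionally controls dips inside excursions via (BFB) and the exponential tail of the return time $\tau$, and uses least action to pass from the infinite legal $\lefteigen$-sampled run to the network's own order.

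The critical case does not go through. Even with $a$ in place of $\lefteigen$, the dichotomy ``$\langle a,\xi^{v,s}\rangle=0$ a.s. for all $v,s$, otherwise nondegenerate variance'' is false, because the increments can be a nonzero deterministic coboundary of the environment chain. For example, let $S_v=\{1,2,3\}$ with only the transitions $1\to1$, $1\to2$, $2\to3$, $3\to1$, and let $\langle a,\xi^{v,s}\rangle$ equal $0,1,-1$ in states $1,2,3$. Then the partial sums along the stack stay bounded. If this happens at every vertex, $\langle a,\eta\rangle+\sum_v\varphi(v,q(v))$ is conserved with $\varphi\neq0$, and the network fails to stabilize from large initial states; your argument would wrongly declare stabilization. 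This is exactly why conserved quantities carry the $\varphi$-term. The correct dichotomy is whether the martingale part of the Poisson decomposition vanishes at every vertex.

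Even with the correct dichotomy, ``a zero-drift walk is unbounded below'' does not apply directly. The potential is $\sum_v S^v(n_v(t))$ evaluated at adaptively chosen counts, some of which may stay bounded. You would need a martingale divergence argument with predictable vertex choice, plus a fairness argument (Abelian property and (IRR)) ensuring that fluctuating vertices topple infinitely often; you leave both open.

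The paper's missing idea sidesteps all of this. By least action it suffices to produce one, possibly illegal, stabilizing sequence. The i.i.d. $\lefteigen$-sampled sequence, observed at the return times of the global environment, is an honest mean-zero random walk $Z_j$ in $\Z^V$ with i.i.d. increments. The orthant lemma then gives one of two outcomes:
\begin{itemize}
\item the walk enters $\{x<\thresh\}$ almost surely, so the network stabilizes; or
\item there is some $a>0$ with $\langle a,Z_1-Z_0\rangle=0$ a.s. Coupling the first toppling of an excursion shows $\langle a,\xi^{v,s}\rangle$ is deterministic with zero sum over every environment excursion, and $\varphi$ is then defined by path independence.
\end{itemize}
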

The subcritical case is established in Proposition \ref{prop:subcritical}, the supercritical case in Proposition \ref{prop:supercritical}, and the critical case follows from Propositions \ref{prop:conserved-crit} and \ref{prop:conserved-exists}. The argument in all three cases is to sample toppled vertices according to the eigenvector $\mathsf{p}$ corresponding to the spectral radius of $M+\alpha I$, while also allowing vertices below the threshold to topple. Then stabilization under $\mathsf{p}$-sampling is equivalent to stabilization in the original dynamics in which toppling above threshold is allowed. This equivalence reduces the analysis of stabilization to standard methods in random walks theory.

\textbf{Outline.}
Section \ref{sec:prelim} introduces stochastic networks and lays out the model. In Section \ref{sec:determ-random}, we use the least-action principle to show that stabilization can be reached by exhibiting any sequence of topplings, even if vertices below the threshold are allowed to topple. Section \ref{sec:topl-rw} introduces the toppling random walk and its connection with the stochastic network. In Section \ref{sec:main-thm} we prove Theorem \ref{thm:main}.

\section{Preliminaries}\label{sec:prelim}

Let $G = (V, E)$ be a finite, connected and directed graph with vertex set $V$ and edge set $E\subset V\times V$. For each $v \in V$, fix a finite set $S_v$ of environments, and call $S := \prod_{v \in V} S_v$
\emph{the set of global environments.}

\textbf{Environment chain and toppling rules.} For each $v \in V$, we are given a stochastic matrix $P_v \in \mathbb{R}^{S_v \times S_v}$, which defines a Markovian environment at $v$, namely a Markov chain $(Y_j^v)_{j \in \mathbb{N}}$ with state space $S_v$ and transition matrix $P_v$. We assume that for each $v$ the chain $(Y_j^v)$ is irreducible and aperiodic, and denote by $\pi_v$ the stationary distribution of $P_v$, that is $\pi_vP_v=\pi_v$, for all $v\in V$. 
Fix also for all $v\in V$ and  $s\in S_v$ a probability distribution $\nu^{v,s}\in\mathsf{Prob}(\Z^V)$, where $\mathsf{Prob}(\Z^V)$ denotes the set of probability distributions on $\Z^V$, the space of functions from $V$ to $\Z$. A sample from $\nu^{v,s}$ is denoted by $\inst^{v,s}$ and its expectation is
$\mathbb{E}\big[\inst^{v,s}\big]=\mu^{v,s}\in\R^V$. Finally, we define the expected toppling matrix in stationarity $M\in\R^{V\times V}$ entrywise as in equation \eqref{eq:expected-toppling-matrix}. The matrix $M$ evaluated at $(v,w)\in V\times V$ gives the expected mass added to $w$ if $v$ has been toppled.

\textbf{Stochastic networks in Markovian environments.} Fix an initial configuration of particles $\eta_0:V\to\N_0$, an initial global environment $q_0\in S$, and a toppling threshold $\thresh:V\to\N_0$. In order to guarantee that the configuration stays non-negative throughout the process, we assume
\begin{align*}
0\leq \thresh(v)+\inst^{v,s}(v), \text{ for all } v\in V \text{ and } s\in S_v  \text{ almost surely}.
\end{align*}
Define the Markov chain $(\eta_n,q_n)_{n\in\N}$, where $\eta_n$ represents the random particle configuration at time $n$ and $q_n$ the random global environment at time $n$, which evolves as follows: given
that the state of the chain at time $n\in \N_0$ is $(\eta_n,q_n)$, we first choose a random vertex $\mathsf{v}_n$ independently of the past with probability 
\begin{equation}\label{eq2}
\mathbb{P}(\mathsf{v}_n=v)=\frac{\max\{\eta_n(v)-\thresh(v),0\}}{|\eta_n-\thresh|_+},
\end{equation}
where $|\eta_n-\thresh|_+=\sum_{v\in V}\max\{\eta_n(v)-\thresh(v),0\}$ is the total weight of the positive part of $\eta_n-\thresh$. Then, the environment at time $n+1$ is updated to
\begin{align*}
    q_{n+1}(w)=q_n(w),\text{ for }w\in V\backslash\{\mathsf{v}_n\},
\end{align*}
and for  $s\in S_{\mathsf{v}_n}$
\begin{align*}
    q_{n+1}(\mathsf{v}_n)=s, \text{ with probability }P_{\mathsf{v}_n}(q_n(\mathsf{v}_n),s).
\end{align*}
In the updated environment $s=q_{n+1}(\mathsf{v}_n)\in S_v$ we sample
a random toppling $\inst_{n+1}$ from the probability distribution $\nu^{\mathsf{v}_n,s}$ and update the particle configuration
\begin{align*}
\eta_{n+1}=\eta_n+\inst_{n+1}.
\end{align*}
The process terminates if for all $v\in V$, we have $\eta_n(v) \leq \thresh(v)$,
in which case we say that $\eta_n$ is stable or it stabilizes. We refer to both \emph{$(\eta_n)_{n\in\mathbb{N}}$ and $(\eta_n, q_n)_{n\in\mathbb{N}}$ as stochastic networks in a Markovian environment.}

In words, at each timestep one chooses a random vertex $v\in V$, whose current state is $s\in S_v$; the state of $v$ is updated to a new state $s'\in S_v$ according to the transition matrix $P_v$ of the Markov chain $(Y_j^v)_j$, and then a toppling is applied to the whole system by reducing/increasing the mass at the toppled vertices  by $\xi_{n+1}$. The process stabilizes if eventually the particle configuration drops below the given threshold $\thresh$, and it stays active if it does not stabilize.

\textbf{Stackwise representation of stochastic networks.} We also define stochastic networks using the stackwise  representation as introduced  in Diaconis-Fulton \cite{rotor}. Let $\mathcal{I}=(I^v_j)_{v\in V, j\in \N_0}$ be a stack of toppling instructions, either deterministic or random, where $I^v_j:V\to\Z$ for all $v\in V$ and $j\in\N$. We assume again that for all $v\in V$ and all $j\in \N_0$ 
$0 \leq \thresh(v)+I^v_j(v)$.
A stochastic network in stackwise representation is a Markov chain $(\eta_n,h_n)_{n\in\N}$, with initial states $\eta_0:V\to\N_0$ and $h_0:V\rightarrow\N_0$ with $h_0=0$.
If the particle configuration $\eta_n$ at time $n$ is stable for the threshold $\thresh$, then the process terminates and we set $(\eta_{n+1},h_{n+1})=(\eta_n,h_n)$. Otherwise, we choose a random vertex $\mathsf{v}_n$ with probability given in \eqref{eq2}, and define
\begin{align*}
    \eta_{n+1}=\eta_n+I^v_{h_n(\mathsf{v}_n)} \text{ and }h_{n+1}=h_n+\delta_{\mathsf{v}_n}.
\end{align*}
\textbf{Stacks sampled from the Markovian environment.} A stochastic network in Markovian environment can also be seen as a special case of a stackwise represented stochastic network in the following way. For $q_0\in S$ an initial global environment and Markov chain $(Y^v_j)_{j\in\N}$ at $v$
that starts in state $q_0(v)$, we sample the instruction $I^v_j$ according to the law of the toppling rule $\nu^{v,Y^v_j}$ independently of all other instructions. The stochastic network in a Markovian environment is then given as the stackwise represented stochastic network process with random stack $\mathcal{I}=(I^v_j)_{v\in V,j\in\N}$.

\textbf{Assumptions.}
Throughout this paper, we impose three assumptions on the toppling rules of the stochastic network that enable us to prove the main theorem. We state these assumptions in both the stochastic network frameworks introduced above.

\emph{MOLI: mass only lost internally.} First, a toppling at a vertex $v \in V$ may decrease mass only at $v$; all other vertices may only gain mass or remain unchanged: for every vertex $v \in V$, every state $s \in S_v$, and every vertex $w \neq v$,
\begin{align}\label{eq:only-local-reduction}
    \inst^{v,s}(w)\geq 0.\tag{MOLI}
\end{align}
Ths assumption holds for the stack $\mathcal{I}=(I_j^{v})_{j\in\N_0,v\in V}$, if the equation above holds for every $I_j^v$.

\emph{IRR: irreducibility.} Every pair of vertices should be able to communicate: for any $v, w \in V$, there must exist a toppling sequence that transfers mass from $v$ to $w$. That is, there exists $k \in \mathbb{N}_0$ such that for all distinct $v, w \in V$
\begin{align}\label{eq:irr}
    (M-\mathsf{Diag}(M))^k(v,w)>0,\tag{IRR}.
\end{align}
In the stackwise representation this reads as : for vertices $v\neq w$ and $n\in\N$, there exists a directed path $(v=v_1,\dots,v_k=w)$ from $v$ to $w$ such that $\Prob(I_{n+j}^{v_j}(v_{j+1})=0)<1$ for every $j\in\{1,2,\dots,k\}$.

\emph{BFB: bounded from below.}
The stochastic network should remain nonnegative during its evolution, and the mass at any $v \in V$ may be decreased by at most $\thresh(v)$. Thus for every $v \in V$ and $s \in S_v$,
\begin{align}\label{eq:bfb}
    -\inst^{v,s}(v)\leq \thresh(v).\tag{BFB}
\end{align}
Consequently, the mass reduction is uniformly bounded by $K := \max\{\thresh(v) \,|\, v \in V\}$. In the stackwise representation, this assumption holds if the same inequality is satisfied for every $I_j^v$.

We present two examples of stochastic networks in Markovian environment that fall into the framework studied in this paper.

\begin{example}[Multitype branching process in a Markovian environment]\normalfont
Consider the multitype branching process in a Markovian environment as introduced in \cite[Section 7.4]{levine-greco2023}. For a finite, connected, and directed graph $G=(V,E)$, associate to each vertex $v\in V$ a finite set of environment states $S_v$, a stochastic matrix $P_v\in\R^{S_v\times S_v}$ (called the transition matrix) and a stochastic matrix $R_v\in\R^{S_v\times \prod_{w:v\rightarrow w} \N_0}$ (called the reproduction matrix);  $v\rightarrow w$ means that $w$ is an out-neighbour of $v$. Reproduction of an individual at $v$ results in an update of the environment at $v$ according to $P_v$, followed by replacing the individual with a random number of offspring sent out to the out-neighbours of $v$. The reproduction matrix $R_v$ dictates the distribution of the offspring vector, which depends on the current environment at $v$. If no more individuals are alive, then the branching process is said to halt.
\end{example}

\begin{example}[Stochastic sandpiles]\normalfont
Let $G = (V, E)$ be a finite, connected graph, and for each $v \in V$ let $\nu^v \in \mathrm{Prob}(\mathbb{Z}^V)$ be a probability distribution such that, if $I^v \sim \nu^v$, then almost surely $I^v(v) \le \deg(v)$ and $I^v(w) \le 0$ for all $w \in V \setminus \{v\}$. A sandpile configuration is a function $\eta : V \to \mathbb{N}_0$. We call a sandpile stable if $\eta(v) \le \deg_G(v)$ for every $v \in V$. If $\eta$ is unstable at some $v \in V$, we perform a legal toppling at $v$, defined by
$T_v \eta = \eta - I^v$,
where $I^v$ is sampled from $\nu^v$, independently of everything else. We stabilize  $\eta$ by performing all possible legal topplings, and we say that $\eta$ stabilizes if only finitely many legal topplings occur during this procedure.
\end{example}

\section{Stackwise representation}\label{sec:determ-random}

\textbf{Deterministic stacks.}
We first consider the deterministic version of stackwise stochastic networks, where the toppling instruction stacks are fixed. Our first goal is to establish a least-action principle under additional assumptions on the toppling rules. As a consequence, toppling at a vertex $v$ only when the particle configuration exceeds its threshold always yields a shorter toppling sequence than one that ignores the threshold when selecting topplings.

Take an initial configuration of particles $\eta:V\rightarrow\N_0$,  a threshold function $\thresh:V\rightarrow \N_0$, and $\mathcal{I}=(I_j^v)_{v\in V, j\in\N}$ a fixed  stack of instructions with $I_j^v:V\to\Z$  for all $v\in V$ and $j\in\N$. For this set of initial data, define the toppling $\Psi_v$ at $v$ as:  for a function $h:V\rightarrow \N_0$ 
$$\Psi_v(\eta,h):=(\eta+I_{h(v)}^v,h+\delta_v),$$
where $\delta_v:V\rightarrow \Z$ is the function that is constantly $0$, except $1$ at position $v$. The toppling at $v$ is called legal if $\eta(v)>\thresh(v)$.
Informally, the operator $\Psi_v$ reads $h(v)$ -- the number of times $v$ has already toppled -- and applies the $h(v)$-th instruction at $v$ from the stack $\mathcal{I}$ to the configuration. Afterwards, it updates the counter by setting $h(v) \leftarrow h(v) + 1$ (leaving all other coordinates of $h$ unchanged).
We will use the following shorthand notation:
$$\Psi_v(\eta) := \Psi_v(\eta, 0),$$
and, for a vertex sequence $(v_1, \ldots, v_n)$ with $n \ge 2$,
$$\Psi_{(v_1,\ldots,v_n)}(\eta) := \Psi_{v_n}\bigl(\Psi_{(v_1,\ldots,v_{n-1})}(\eta)\bigr).
$$
We write $\psi_{(v_1,\ldots,v_n)}(\eta)$ for the first component of $\Psi_{(v_1,\ldots,v_n)}(\eta)$. A sequence $(v_1,\ldots,v_n)$ is called legal for $\eta$ if, for every $i \in \{1,\ldots,n\}$, the toppling at $v_i$ is legal for $\psi_{(v_1,\ldots,v_{i-1})}(\eta)$.
For a toppling sequence $(v_1,\ldots,v_n)$, the odometer function $m_{(v_1,\ldots,v_n)}$ records the number of topplings:
$$
m_{(v_1,\ldots,v_n)}(v) = \#\{\,1 \le i \le n : v_i = v\,\}.
$$
We show that the final configuration is independent of the toppling order, i.e. the model is Abelian.
\begin{lemma}[Abelian property]\label{lem:abelian-stack}
Assuming \eqref{eq:only-local-reduction}, for any initial configuration $\eta$ and any two toppling sequences $\overline{\mathsf{v}} = (v_1,\ldots,v_n)$ and $\overline{\mathsf{w}}=(w_1,...,w_m)$ with identical odometers $m_{\overline{\mathsf{v}}} = m_{\overline{\mathsf{w}}}$, we have
$
\topple_{\overline{\mathsf{v}}}(\eta) = \topple_{\overline{\mathsf{w}}}(\eta).
$
\end{lemma}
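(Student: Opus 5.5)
The plan is a direct computation showing that $\topple_{\overline{\mathsf{v}}}(\eta)$ depends only on the odometer $m_{\overline{\mathsf{v}}}$. Here $\topple$ acts on arbitrary (not necessarily legal) sequences, so legality plays no role and no Diamond lemma argument is needed. The key observation is that when the $i$-th toppling $v_i$ is performed, the counter $h(v_i)$ equals the number of earlier occurrences of $v_i$ in the sequence. Hence the instruction applied is $I^{v_i}_{j}$, where $j=\#\{k<i: v_k=v_i\}$. This index depends only on how many times $v_i$ appeared before, not on the interleaving with the other vertices.

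First, I will show by induction on $n$ the closed formula
\begin{equation*}
\topple_{(v_1,\ldots,v_n)}(\eta)=\Bigl(\eta+\sum_{v\in V}\sum_{j=0}^{m_{\overline{\mathsf{v}}}(v)-1} I^v_j,\; m_{\overline{\mathsf{v}}}\Bigr),
\end{equation*}
using the indexing convention of $\topple_v$, which reads instruction $h(v)$ starting from $h=0$. The base case $n=1$ is the definition $\topple_{v_1}(\eta)=(\eta+I^{v_1}_0,\delta_{v_1})$. For the inductive step, the second component after $n-1$ topplings is $m_{(v_1,\ldots,v_{n-1})}$ by the induction hypothesis. So $\topple_{v_n}$ adds $I^{v_n}_{m_{(v_1,\ldots,v_{n-1})}(v_n)}$ and increments the counter at $v_n$. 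This extends the inner sum at $v_n$ by exactly one term, and the counter becomes $m_{(v_1,\ldots,v_n)}$.

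Second, the right-hand side of the formula is a function of $m_{\overline{\mathsf{v}}}$ alone, since addition in $\Z^V$ is commutative. Hence equal odometers give $\topple_{\overline{\mathsf{v}}}(\eta)=\topple_{\overline{\mathsf{w}}}(\eta)$; in particular $n=m=\sum_v m_{\overline{\mathsf{v}}}(v)$.

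The only subtle point, and not a real obstacle, is bookkeeping. I need to be careful that the operation $\Psi_v(\eta)=\Psi_v(\eta,0)$ is extended to the pair state when iterating, so that the counter is carried along rather than reset. I also need to check that configurations are allowed to be $\Z$-valued in intermediate steps, or else note that non-negativity is irrelevant to the identity. Assumption \eqref{eq:only-local-reduction} is not needed for this purely algebraic identity. It becomes relevant only later, for the least-action principle with legal topplings. I would remark on this rather than force its use.
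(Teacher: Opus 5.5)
Your proof is correct and follows the same approach as the paper. Both express $\topple_{\overline{\mathsf{v}}}(\eta)$ as the pair consisting of $\eta$ plus the sum of the first $m_{\overline{\mathsf{v}}}(v)$ instructions of each stack, together with $m_{\overline{\mathsf{v}}}$, which depends only on the odometer. Your induction just makes explicit what the paper takes as immediate from the definition. Your index range $0,\ldots,m_{\overline{\mathsf{v}}}(v)-1$ fits the counter convention $h=0$ better than the paper's $1,\ldots,m_{\overline{\mathsf{v}}}(v)$, and you are right that \eqref{eq:only-local-reduction} is not actually used.
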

\begin{proof}
In view of the definition of the toppling operator, we have
    \begin{align*}
        \topple_{\overline{\mathsf{v}}}(\eta)=\Big(\eta+\sum_{v\in V}\sum_{i=1}^{m_{\overline{\mathsf{v}}}}I_i^v,m_{\overline{\mathsf{v}}}\Big)
        =\Big(\eta+\sum_{v\in V}\sum_{i=1}^{m_{\overline{\mathsf{w}}}}I_i^v,m_{\overline{\mathsf{w}}}\Big)
        =\topple_{\overline{\mathsf{w}}}(\eta).
    \end{align*}
\end{proof}
The next result establishes that any sequence of potentially illegal topplings must always be longer than any legal sequence of topplings.
\begin{lemma}\label{lem:moli}
Assuming \eqref{eq:only-local-reduction}, let $\eta$ be any initial particle configuration and let $\overline{\mathsf{v}} = (v_1,\ldots,v_n)$ and $\overline{\mathsf{w}} = (w_1,\ldots,w_n)$ be toppling sequences such that $m_{\overline{\mathsf{v}}}(v) = m_{\overline{\mathsf{w}}}(v)$ for some $v \in V$ and $m_{\overline{\mathsf{v}}}(w) \le m_{\overline{\mathsf{w}}}(w)$ for all $w \in V \setminus \{v\}$. Then
$
\toppledconf_{\overline{\mathsf{v}}}(\eta)(v) \le \toppledconf_{\overline{\mathsf{w}}}(\eta)(v).
$
\end{lemma}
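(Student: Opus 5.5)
The plan is to use the explicit formula for the toppled configuration that already appears in the proof of Lemma~\ref{lem:abelian-stack}. For any toppling sequence $\overline{\mathsf{u}}$, the first component $\toppledconf_{\overline{\mathsf{u}}}(\eta)$ depends only on the odometer $m_{\overline{\mathsf{u}}}$. Evaluated at the vertex $v$, it reads
\begin{align*}
\toppledconf_{\overline{\mathsf{u}}}(\eta)(v)=\eta(v)+\sum_{i<m_{\overline{\mathsf{u}}}(v)}I^v_i(v)+\sum_{w\neq v}\ \sum_{i<m_{\overline{\mathsf{u}}}(w)}I^w_i(v),
\end{align*}
with the indexing convention of the stacks (starting at $h=0$). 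I would apply this with $\overline{\mathsf{u}}=\overline{\mathsf{v}}$ and with $\overline{\mathsf{u}}=\overline{\mathsf{w}}$ and subtract. The statement reduces to comparing the two sums term by term.

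\textbf{Self-contribution.} Since $m_{\overline{\mathsf{v}}}(v)=m_{\overline{\mathsf{w}}}(v)$, the sum $\sum_{i<m(v)}I^v_i(v)$ is literally the same in both expressions and cancels. This matters because the self-instructions $I^v_i(v)$ may be negative. Equality of the odometer at $v$ is exactly what removes any sign issue there.

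\textbf{Contributions from other vertices.} For $w\neq v$ we have $m_{\overline{\mathsf{v}}}(w)\le m_{\overline{\mathsf{w}}}(w)$. Hence the difference of the two inner sums is $\sum_{m_{\overline{\mathsf{v}}}(w)\le i<m_{\overline{\mathsf{w}}}(w)}I^w_i(v)$. By \eqref{eq:only-local-reduction} every term $I^w_i(v)$ with $w\neq v$ is nonnegative, so this difference is nonnegative. Summing over $w\neq v$ gives $\toppledconf_{\overline{\mathsf{w}}}(\eta)(v)-\toppledconf_{\overline{\mathsf{v}}}(\eta)(v)\ge 0$, which is the claim.

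There is no real obstacle. The only care needed is to justify the closed-form expression by induction on the sequence length, directly from the definition of $\Psi_v$, noting that each toppling at $u$ uses the next unused instruction $I^u_{h(u)}$. The equal lengths $n$ of the two sequences play no role in the argument.
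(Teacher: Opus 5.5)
Your proposal is correct and follows essentially the same argument as the paper: write the toppled configuration at $v$ in closed form via the odometer, cancel the self-contribution using $m_{\overline{\mathsf{v}}}(v)=m_{\overline{\mathsf{w}}}(v)$, and use \eqref{eq:only-local-reduction} to see that the extra instructions from vertices $w\neq v$ can only add mass at $v$. Your indexing of the stack from $0$ is actually more faithful to the definition of $\Psi_v$ than the paper's $\sum_{i=1}^{m}$. Your remark that the equal lengths play no role is also right. Neither point affects the argument.
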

\begin{proof}
    We have
    \begin{align*}
        \toppledconf_{\overline{\mathsf{v}}}(\eta)(v)&=\eta(v)+\sum_{i=1}^{m_{\overline{\mathsf{v}}}(v)}I_i^v(v)+\sum_{w\in V\backslash\{v\}}\sum_{i=1}^{m_{\overline{\mathsf{v}}}(w)}I_i^w(v)\\
        &\leq \eta(v)+\sum_{i=1}^{m_{\overline{\mathsf{w}}}(v)}I_i^v(v)+\sum_{w\in V\backslash\{v\}}\sum_{i=1}^{m_{\overline{\mathsf{w}}}(w)}I_i^w(v)=\toppledconf_{\overline{\mathsf{w}}}(\eta)(v),
    \end{align*}
where $m_{\overline{\mathsf{v}}}(v)=m_{\overline{\mathsf{w}}}(v)$ was used for
$\sum_{i=1}^{m_{\overline{\mathsf{v}}}(v)}I_i^v(v)=\sum_{i=1}^{m_{\overline{\mathsf{w}}}(v)}I_i^v(v)$
and \eqref{eq:only-local-reduction} together with $m_{\overline{\mathsf{v}}}(w)\leq m_{\overline{\mathsf{w}}}(w)$ for $w\in V\backslash\{v\}$ yields the upper bound
    $$\sum_{w\in V\backslash\{v\}}\sum_{i=1}^{m_{\overline{\mathsf{v}}}(w)}I_i^w(v)\leq\sum_{w\in V\backslash\{v\}}\sum_{i=1}^{m_{\overline{\mathsf{w}}}(w)}I_i^w(v).$$
\end{proof}

\begin{prop}[Least-action principle]\label{prop:least-action}
 Assume \eqref{eq:only-local-reduction} holds. For any initial configuration $\eta$ and toppling sequences $\overline{\mathsf{v}} = (v_1,\ldots,v_n)$ and $\overline{\mathsf{w}} = (w_1,\ldots,w_m)$, if $\overline{\mathsf{w}}$ is legal and $\toppledconf_{\overline{\mathsf{v}}}(\eta)$ is stable, then
$m_{\overline{\mathsf{w}}} \le m_{\overline{\mathsf{v}}}$
(pointwise).
\end{prop}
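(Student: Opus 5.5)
The plan is the standard induction along the legal sequence $\overline{\mathsf{w}}$, in the spirit of the classical least-action principle. I will show by induction on $k\in\{0,\ldots,m\}$ that the odometer of the prefix satisfies $m_{(w_1,\ldots,w_k)}\le m_{\overline{\mathsf{v}}}$ pointwise. The case $k=0$ is trivial. For the inductive step, assume $m_{(w_1,\ldots,w_{k})}\le m_{\overline{\mathsf{v}}}$ and suppose, for contradiction, that $m_{(w_1,\ldots,w_{k+1})}\not\le m_{\overline{\mathsf{v}}}$. Since only the coordinate at $v:=w_{k+1}$ changed, this forces $m_{(w_1,\ldots,w_k)}(v)=m_{\overline{\mathsf{v}}}(v)$.

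Next I apply Lemma \ref{lem:moli} with the roles chosen so that the prefix plays the role of the "smaller" sequence. Concretely, set $\overline{\mathsf{u}}=(w_1,\ldots,w_k)$. Then $m_{\overline{\mathsf{u}}}(v)=m_{\overline{\mathsf{v}}}(v)$ and $m_{\overline{\mathsf{u}}}(w)\le m_{\overline{\mathsf{v}}}(w)$ for all $w\neq v$. Lemma \ref{lem:moli} is stated for sequences of equal length, but its proof only uses the explicit formula for $\toppledconf$ in terms of odometers (as in Lemma \ref{lem:abelian-stack}). So I will either note that the length restriction is inessential, or pad the comparison by reading the formula directly. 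This yields
\[
\toppledconf_{\overline{\mathsf{u}}}(\eta)(v)\le \toppledconf_{\overline{\mathsf{v}}}(\eta)(v)\le \thresh(v),
\]
where the last inequality is the stability of $\toppledconf_{\overline{\mathsf{v}}}(\eta)$.

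On the other hand, legality of $\overline{\mathsf{w}}$ means the toppling at $w_{k+1}=v$ is legal for $\toppledconf_{\overline{\mathsf{u}}}(\eta)$, i.e.\ $\toppledconf_{\overline{\mathsf{u}}}(\eta)(v)>\thresh(v)$. This is a contradiction, which closes the induction. Taking $k=m$ gives $m_{\overline{\mathsf{w}}}\le m_{\overline{\mathsf{v}}}$.

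The only delicate point is the applicability of Lemma \ref{lem:moli} when the two sequences have different lengths. I expect this is harmless, since the inequality there depends only on the odometers and on \eqref{eq:only-local-reduction}: the nonnegativity $I^w_i(v)\ge 0$ for $w\ne v$ makes the off-site contributions monotone in the odometer. I will therefore restate that computation in odometer form rather than rely on the equal-length hypothesis. Implicitly, I also use the paper's convention that the $j$-th toppling at $v$ uses the same stack entry regardless of ordering, which is exactly what the odometer formula encodes.
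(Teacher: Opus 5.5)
Your proof is correct and is essentially the paper's argument. The paper takes the smallest index $J$ at which the prefix odometer $m_{(w_1,\ldots,w_J)}$ first fails to be dominated by $m_{\overline{\mathsf{v}}}$, and derives the same contradiction from legality, Lemma~\ref{lem:moli} and stability; this is your induction phrased as a minimal counterexample. Your caveat about the equal-length hypothesis in Lemma~\ref{lem:moli} is well founded: the paper applies that lemma to $(w_1,\ldots,w_{J-1})$ and $\overline{\mathsf{v}}$, which have different lengths, without comment, and as you note the lemma's proof uses only the odometers.
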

\begin{proof}
Assume the contrary, that is, there exists at least one $v\in V$ such that
$m_{\overline{\mathsf{w}}}(v)> m_{\overline{\mathsf{v}}}(v)$.
We define the sequences $\overline{\mathsf{w}}_j$, for all $j\leq m$, as
$\overline{\mathsf{w}}_j=(w_1,...,w_j)$. 
Since $m_{\overline{\mathsf{w}}}(v)> m_{\overline{\mathsf{v}}}(v)$, there must exist $j\leq m$ such that
$m_{\overline{\mathsf{w}}_j}\nleq m_{\overline{\mathsf{v}}}$.
Let $J\in\N$ be the smallest such number, and let $x\in V$ be the unique vertex such that
    $m_{\overline{\mathsf{w}}_J}(x)> m_{\overline{\mathsf{v}}}(x)$.
    We then have $m_{\overline{\mathsf{w}}_{J-1}}\leq m_{\overline{\mathsf{v}}}$ and $m_{\overline{\mathsf{w}}_{J-1}}(x)= m_{\overline{\mathsf{v}}}(x)$. Since $\overline{\mathsf{w}}$ is a legal toppling sequence, this implies $\toppledconf_{\overline{\mathsf{w}}_{J-1}}(\eta)(x)> \thresh(x)$, which together with Lemma \ref{lem:moli} yields
    $\toppledconf_{\overline{\mathsf{v}}}(\eta)(x)\geq\toppledconf_{\overline{\mathsf{w}}_{J-1}}(\eta)(x)> \thresh(x),$
and this contradicts the assumption that $\overline{\mathsf{v}}$ is stabilizing. 
\end{proof}

\textbf{Random stacks.}
We next apply Proposition \ref{prop:least-action} to show that, for stochastic networks with random stacks of toppling instructions, it suffices to find any stabilizing sequence; this ensures the existence of a legal stabilizing sequence.
Consider now the random stack $\mathcal{I}=(I^v_j)_{v\in V,j\in\N_0}$ of independent instructions distributed as
$I^v_j\sim\inst^{v,Y_j^{v}}$.
For any realization of the stack $\mathcal{I}$, we are in the deterministic setup studied previously.

\begin{prop}\label{prop:illegal-implies-legal}
Let $\mathcal{I}$ be a random instruction stack satisfying \eqref{eq:only-local-reduction}, and let $\eta$ be any initial particle configuration. If there exists a random sequence $(v_i)_{i\in \mathbb{N}}$ and some $n \in \mathbb{N}$ such that $\toppledconf_{(v_1,\ldots,v_n)}(\eta)$ is almost surely stable, then for any random sequence $(w_i)_{i\in \mathbb{N}}$ there exists $m \in \mathbb{N}$ such that $w_m$ is not a legal toppling for $\toppledconf_{(w_1,\ldots,w_{m-1})}(\eta)$.
\end{prop}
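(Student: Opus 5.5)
The plan is to reduce the statement to the deterministic least-action principle, Proposition \ref{prop:least-action}, applied realization by realization of the random stack. Fix an outcome $\omega$ outside the null set on which $\toppledconf_{(v_1,\ldots,v_n)}(\eta)$ fails to be stable. For this $\omega$ the stack $\mathcal{I}(\omega)$ is a deterministic stack satisfying \eqref{eq:only-local-reduction}, because the assumption is imposed on every instruction $I_j^v$. The sequences $(v_i(\omega))$ and $(w_i(\omega))$ are then deterministic vertex sequences, and $\overline{\mathsf{v}}=(v_1(\omega),\ldots,v_n(\omega))$ is a stabilizing sequence for $\eta$ with respect to $\mathcal{I}(\omega)$. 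If $n$ is itself random, we simply use $n(\omega)$; nothing below needs $n$ to be deterministic.

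Next, argue by contradiction on this fixed $\omega$. Suppose every $w_m(\omega)$ is a legal toppling for $\toppledconf_{(w_1,\ldots,w_{m-1})}(\eta)$. Then for each $m$ the finite prefix $\overline{\mathsf{w}}_m=(w_1,\ldots,w_m)$ is a legal sequence. Proposition \ref{prop:least-action} applies to the pair $\overline{\mathsf{v}},\overline{\mathsf{w}}_m$ and gives $m_{\overline{\mathsf{w}}_m}\le m_{\overline{\mathsf{v}}}$ pointwise. Summing over vertices yields
\[
m=\sum_{x\in V} m_{\overline{\mathsf{w}}_m}(x)\le \sum_{x\in V} m_{\overline{\mathsf{v}}}(x)=n(\omega).
\]
This fails for $m=n(\omega)+1$. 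Hence some index $m\le n(\omega)+1$ has $w_m$ illegal, and since $\omega$ was an arbitrary point of a full-measure set, the conclusion holds almost surely.

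The main point needing care is not the inequality but the measure-theoretic bookkeeping. First, Proposition \ref{prop:least-action} must be invoked for a single realization, with both sequences evaluated on the same stack. This is legitimate because the toppling operators $\topple_v$ read the instructions only through the counter $h$, so both sequences consume the same instructions $I^v_j(\omega)$. Second, the statement should be read as holding almost surely, with the index $m$ allowed to be random, bounded by $n(\omega)+1$. No independence or distributional property of the stack is used, only the pointwise validity of \eqref{eq:only-local-reduction}.
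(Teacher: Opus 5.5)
Your argument is correct and is essentially the paper's proof. Both apply the least-action principle (Proposition \ref{prop:least-action}) realization by realization to a stabilizing prefix of $(v_i)$ and a legal prefix of $(w_i)$, then sum the odometer inequality to bound the length of any legal sequence. The paper phrases this through the stabilization time $\tau$ and derives a contradiction ($\tau=\infty$ with positive probability); you work pointwise with $n$, which also gives the explicit bound $m\le n+1$.
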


\begin{proof}
Define for the sequence $(v_i)_{i\in\N}$ the first time it stabilizes $\eta$ as
 $$\tau:=\inf\{n\in\N\,|\,\toppledconf_{(v_1,...,v_n)}(\eta)\text{ is stable}\},$$
which by assumption is finite almost surely. Assume there exists a random sequence $(w_i)_{i\in\N}$ for which all topplings are legal with positive probability. Then on the event that all topplings in the sequence $(w_i)_{i\in\N}$ are legal,  by Proposition \ref{prop:least-action} we get
$m_{(w_1,...,w_n)}\leq m_{(v_1,...,v_\tau)}$,
 for all $n\in\N$. Thus
$$n=\sum_{v\in V} m_{(w_1,...,w_n)}(v)\leq \sum_{v\in V} m_{(v_1,...,v_\tau)}(v)=\tau,$$
which implies that $\tau$ is infinite with positive probability, contradicting the  assumption.
\end{proof}

We emphasize the importance of Proposition \ref{prop:illegal-implies-legal}. First, the existence of a stabilizing toppling sequence almost surely implies that every legal toppling sequence is finite almost surely . On the other hand, the existence of an infinite legal toppling sequence rules out the existence of a legal stabilizing sequence. Therefore, Proposition \ref{prop:illegal-implies-legal} allows the proof of the Theorem \ref{thm:main} to be reduced to the construction of a toppling sequence — possibly non-legal — with the desired stabilization properties.

\section{Toppling random walk}\label{sec:topl-rw}

Recall the expected toppling matrix $M$ defined in \eqref{eq:expected-toppling-matrix}, and 
$$\alpha:=\max\{-M(v,v): v\in V\}+1.$$
We construct a random toppling sequence; we sample such a sequence according to the Perron-Frobenius eigenvector of $M+\alpha I$. For doing so, we show that this eigenvector - denoted by $\mathsf{p}$ - has all positive entries. A matrix $0\leq A\in\R^{V\times V}$ is called primitive, if there exists some $k\in\N$ such that $A^k>0$.

\begin{lemma}\label{lem:primitive}
Assuming \eqref{eq:irr} and \eqref{eq:only-local-reduction}, the matrix $M + \alpha I$ is primitive.
\end{lemma}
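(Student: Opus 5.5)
We need to show $M+\alpha I$ is nonnegative and that some power of it is strictly positive. Write $A := M+\alpha I = D + N$, where $D := \mathsf{Diag}(M)+\alpha I$ and $N := M-\mathsf{Diag}(M)$.

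\textbf{Step 1: nonnegativity, with a positive diagonal.} By \eqref{eq:only-local-reduction}, every sample satisfies $\inst^{v,s}(w)\ge 0$ for $w\neq v$. Taking expectations gives $\mu^{v,s}(w)\ge 0$, and averaging against $\pi_v$ in \eqref{eq:expected-toppling-matrix} gives $M(v,w)\ge 0$ off the diagonal, so $N\ge 0$ entrywise. On the diagonal, the choice of $\alpha$ gives
\[
M(v,v)+\alpha \;\ge\; M(v,v) - M(v,v) + 1 \;=\; 1 > 0,
\]
so $D$ is a diagonal matrix with all diagonal entries at least $1$. Hence $A\ge 0$ and $A \ge I + N$ entrywise.

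\textbf{Step 2: a positive power.} Since $A\ge I+N\ge 0$ entrywise, monotonicity of products of nonnegative matrices gives $A^j\ge (I+N)^j$ for every $j$. Expand binomially; this is legitimate because $I$ commutes with $N$:
\[
(I+N)^k=\sum_{i=0}^k\binom{k}{i}N^i \;\ge\; I+N^k .
\]
Here $k$ is the exponent from \eqref{eq:irr}. Every term in the expansion is nonnegative, so dropping all but $i=0$ and $i=k$ preserves the inequality. The term $I$ makes all diagonal entries positive, and by \eqref{eq:irr} the term $N^k$ makes all off-diagonal entries positive. Therefore $A^k\ge (I+N)^k>0$, and $A$ is primitive.

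\textbf{Main obstacle: the degenerate cases.} If $|V|=1$, then \eqref{eq:irr} is vacuous, but $A$ is a positive $1\times1$ matrix and so is trivially primitive. If $k=0$ is allowed in \eqref{eq:irr}, then $N^0=I$ says nothing about off-diagonal entries. In that case I would read \eqref{eq:irr} as its stackwise counterpart instead: for distinct $v,w$ there is a directed path $v=v_1,\dots,v_\ell=w$ with $N(v_j,v_{j+1})>0$ at each step. This gives $N^{\ell-1}(v,w)>0$ for some $\ell$ depending on the pair. Then take $K:=|V|$. Every path length $\ell-1$ is at most $|V|-1$ and so appears in the binomial sum for $(I+N)^K$, which yields $(I+N)^K>0$. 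The strictly positive diagonal of $D$ is exactly what makes these different path lengths combine into a single power, and this is the point that needs care.
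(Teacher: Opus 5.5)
Your proof is correct and is essentially the paper's argument: the paper also uses $(M+\alpha I)(v,v)\ge 1$ for the diagonal and $(M+\alpha I)^k \ge (M-\mathsf{Diag}(M))^k$ off the diagonal, with the $k$ from \eqref{eq:irr}. Your chain $A^k \ge (I+N)^k \ge I+N^k$ packages those two estimates into one line, and your worry about $k=0$ is unnecessary, since $N^0=I$ has zero off-diagonal entries and so can never witness \eqref{eq:irr} when $|V|\ge 2$.
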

\begin{proof}
By the choice of $\alpha$ we have
$(M+\alpha I)(v,v)>0$,
for all $v\in V$, and for $v\neq w$ by \eqref{eq:only-local-reduction} we have 
$(M+\alpha I)(v,w)\geq  0$. The irreducibility assumption \eqref{eq:irr} yields the existence of $k\in\N$ such that $(M-\mathsf{Diag}(M))^k(v,w)>0$ for all $v\neq w$. Thus 
    $$(M+\alpha I)^k(v,w)\geq (M-\mathsf{Diag}(M))^k(v,w)>0.$$
    Furthermore, for all $v\in V$ we have that
    $$(M+\alpha I)^K(v,v)\geq (M+\alpha I)(v,v)^K\geq 1>0,$$
    and hence $(M+\alpha I)^K>0$.
\end{proof}
Thus, by Perron-Frobenius, there exists $r(M+\alpha I)>0$ (Perron-Frobenius eigenvalue of $M+\alpha I$) and $\mathsf{p}>0$ the corresponding left eigenvector normalized such that $\sum_{v\in V}\mathsf{p}(v)=1$.
Write 
$$\rho:=r(M+\alpha I)-\alpha.$$
Consider the sequence  $(\mathsf{w}_i)_{i\in\N}$, by sampling each $\mathsf{w}_i$ independently and identically distributed as
$$\mathbb{P}(\mathsf{w}_1=v)=\mathsf{p}(v),$$
for all $v\in V$. Recall that $(Y^v_j)_{j\in\N}$ is the environment chain at $v$ with initial state $q_0(v)\in S_v$, where $q_0=(q_0(v))_{v\in V}$ is the initial global environment, and the random stack is given by
$\mathcal{I}=(I^v_j)_{v\in V,j\in\N_0}$,
where $I^v_j$ has the same distribution as $\inst^{v,Y_j^{v}}$. For the initial configuration of particles $\eta_0$, write
$$\hat{\eta}_n=\toppledconf_{(\mathsf{w}_1,...,\mathsf{w}_n)}(\eta_0),$$
and define the sequence of global environments as
$$\hat{q}_n=\big(Y^v_{m_{(\mathsf{w}_1,...,\mathsf{w}_n)}(v)}\big)_{v\in V}.$$
The sequence $(\hat{\eta}_n,\hat{q}_n)_{n\in\N}$ is a Markov chain on $\Z^V\times S$, and also $(\hat{q}_n)_{n\in\N}$ and $(\mathsf{w}_{n+1},\hat{q}_n)_{n\in\N}$ are Markov chains with state spaces $S$ and $V\times S$ respectively. 
Define the sequence of stopping times $(\tau_j)_{j\in\N}$ as the consecutive return times to the initial environment $q_0$: set $\tau_0=0$ and 
for $j\geq 1$
$$\tau_j=\inf\{i>\tau_{j-1}:\hat{q}_{i}=q_0\}.$$
The sequence $(\hat{\eta}_{\tau_j},\hat{q}_{\tau_j})_{j\in\N}$ is again a Markov chain and $(\hat{\eta}_{\tau_j})_{j\in\N}$ is a random walk on $\Z^V$. Define
$$Z_j=\hat{\eta}_{\tau_j},$$
for $j\geq 0$; we call $(Z_j)_{j\in\N}$ \emph{the toppling random walk}; this is a random walk on $\Z^V$ with independent increments.

\begin{lemma}\label{lem: stationary distribution}
The stationary distribution of the Markov chain $(\mathsf{w}_{n+1},\hat{q}_n)_{n\in\N}$ is given by
    $$\Pi(v,q)=\mathsf{p}(v)\cdot\prod_{u\in V}\pi_u(q(u)).$$
\end{lemma}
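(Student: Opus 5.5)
I will verify stationarity directly, by writing down the one-step transition kernel of the chain $(\mathsf{w}_{n+1},\hat{q}_n)_{n\in\N}$ and checking that $\Pi K=\Pi$. Given the current state $(v,q)$, the vertex $v=\mathsf{w}_{n+1}$ is the next one to be toppled. Toppling it advances the environment chain at $v$ by one step according to $P_v$, while all other coordinates stay fixed. So $\hat{q}_{n+1}(v)=s$ with probability $P_v(q(v),s)$ and $\hat{q}_{n+1}(u)=q(u)$ for $u\neq v$. Independently of this, $\mathsf{w}_{n+2}$ is a fresh sample from $\mathsf{p}$. The kernel is therefore
\[
K\big((v,q),(w,q')\big)=\mathsf{p}(w)\,P_v\big(q(v),q'(v)\big)\prod_{u\neq v}\mathbbm{1}\{q'(u)=q(u)\}.
\]
To justify the Markov property and this formula, I will use the construction of the stack: the instructions $I^v_j$ are sampled along the chains $(Y^v_j)_j$, the chains at different vertices are independent, and the sequence $(\mathsf{w}_i)$ is i.i.d.\ and independent of the environment chains.

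Next I compute $\sum_{(v,q)}\Pi(v,q)K((v,q),(w,q'))$. The factor $\mathsf{p}(w)$ comes out of the sum. Fixing $v$, the indicators force $q(u)=q'(u)$ for every $u\neq v$, so only the coordinate $q(v)$ is summed. This gives
\[
\mathsf{p}(w)\sum_{v\in V}\mathsf{p}(v)\prod_{u\neq v}\pi_u(q'(u))\sum_{s\in S_v}\pi_v(s)P_v(s,q'(v)).
\]
Stationarity $\pi_vP_v=\pi_v$ turns the inner sum into $\pi_v(q'(v))$, so every term equals $\mathsf{p}(v)\prod_u\pi_u(q'(u))$. Since $\sum_v\mathsf{p}(v)=1$, the total is $\mathsf{p}(w)\prod_u\pi_u(q'(u))=\Pi(w,q')$, which is what we want. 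In particular, no eigenvector property of $\mathsf{p}$ is needed here; only its normalization is used.

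It remains to check that $\Pi$ is a probability measure, which follows from the normalizations of $\mathsf{p}$ and of the $\pi_u$. If "the" stationary distribution is to be read as the unique one, I will also prove irreducibility of the chain on $V\times S$. Every entry of $\mathsf{p}$ is positive by Lemma~\ref{lem:primitive} and Perron--Frobenius. Each $P_u$ is irreducible, so any target global environment can be reached by toppling the vertices one at a time along paths in their own environment chains, choosing each next toppled vertex with positive probability. Finiteness of $V\times S$ then gives uniqueness.

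The main obstacle is making the kernel rigorous, that is, showing that the future environment depends only on the current pair $(\mathsf{w}_{n+1},\hat{q}_n)$. This relies on the strong Markov property of each $Y^v$ at the random indices $m_{(\mathsf{w}_1,\dots,\mathsf{w}_n)}(v)$. Those indices are determined by the $\mathsf{w}_i$, which are independent of the environment chains, so I can condition on the whole sequence $(\mathsf{w}_i)$ and argue with deterministic indices.
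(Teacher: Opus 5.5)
Your proposal is correct and follows the paper's proof. The paper writes down the same transition kernel, $\mathsf{p}(w)\,P_v(q(v),q'(v))$ times the indicator that the other coordinates are unchanged, and verifies $\Pi K=\Pi$ directly using only $\pi_vP_v=\pi_v$ and $\sum_v\mathsf{p}(v)=1$. Your extra justification of the Markov property (conditioning on the independent sequence $(\mathsf{w}_i)$) and your irreducibility argument for uniqueness (using $\mathsf{p}>0$ and irreducibility of each $P_u$) go beyond what the paper spells out, and both are sound.
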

\begin{proof}
For  $q\in S$ and $w\in V$, write $q^{\neq w} = (q(v))_{v\neq w}$. It is easy to see that the Markov chain $(\orderp_{n+1},\statep_n)_{n\in\N}$ has transition probabilities
$$
Q\bigl((w,r),(v,q)\bigr) = \lefteigen(v) P_w(r(w),q(w))\mathbbm{1}\{r^{\neq w} = q^{\neq w}\}
$$
where $(w,r),(v,q)\in V\times S$. 
We have
\begin{align*}
\sum_{w\in V}\sum_{r\in S}\Pi(w,r)& Q((w,r),(v,q))= \sum_{w\in V}\sum_{r\in S}\lefteigen(w) \Big(\prod_{u\in V}\pi_u(r(u))\Big) \lefteigen(v) P_w(r(w),q(w))\mathbbm{1}\{r^{\neq w} = q^{\neq w}\}\\
&=\lefteigen(v) \sum_{w\in V}\lefteigen(w)\sum_{r\in S}\Big(\pi_w(r(w))P_w(r(w),q(w))\prod_{u\neq w}\pi_u(q(u))\Big)\mathbbm{1}\{r^{\neq w} = q^{\neq w}\} \\
&= \lefteigen(v) \sum_{w\in V}\lefteigen(w)\Big(\prod_{u\neq w}\pi_u(q(u))\Big)\sum_{r\in S}\pi_w(r(w))P_w(r(w),q(w))\mathbbm{1}\{r^{\neq w} = q^{\neq w}\}\\
&= \lefteigen(v) \sum_{w\in V}\lefteigen(w)\Big(\prod_{u\neq w}\pi_u(q(u))\Big)\sum_{s\in S_w}\pi_w(s)P_w(s,q(w))\\
&= \lefteigen(v) \sum_{w\in V}\lefteigen(w)\Big(\prod_{u\neq w}\pi_u(q(u))\Big)\pi_w(q(w))\\
&= \lefteigen(v) \Big(\prod_{u\in V}\pi_u(q(u))\Big) \sum_{w\in V}\lefteigen(w) = \lefteigen(v) \prod_{u\in V}\pi_u(q(u)) = \Pi(v,q).
\end{align*}
\end{proof}

\begin{lemma}\label{lem: sum up to tau}
For all $v\in V$ and $q\in S$
$$
\E\Big[\sum_{n=0}^{\tau-1}\mathbbm{1}\{\orderp_{n+1}=v,\statep_n=q\}\Big] = \E[\tau]\Pi(v,q),
$$
where $\Pi$ is the stationary distribution from Lemma \ref{lem: stationary distribution} and $\tau$ is the first return time to the initial environment $q_0$.
\end{lemma}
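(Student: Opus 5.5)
This is the standard cycle (or regenerative) formula for positive recurrent Markov chains. The plan is to apply it to the chain $X_n:=(\orderp_{n+1},\statep_n)$ on the finite state space $V\times S$, with regeneration at the set $A:=V\times\{q_0\}$. The chain $(X_n)$ starts at $X_0=(\orderp_1,q_0)$, whose first coordinate is distributed as $\lefteigen$. Since $\orderp_{n+1}$ is independent of $\statep_n$ and of the past, every visit of $\statep$ to $q_0$ resamples the first coordinate according to $\lefteigen$. Hence the excursions of $(X_n)$ between consecutive visits to $A$ are i.i.d., and $\tau$ is exactly the first return time of $X$ to $A$.

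\textbf{Finiteness of $\E[\tau]$.} The chain $(\statep_n)$ on the finite set $S$ moves by choosing a coordinate $w$ with probability $\lefteigen(w)>0$ and updating it by the irreducible, aperiodic $P_w$. It is therefore irreducible on $\prod_v S_v$: change the coordinates one at a time. A finite irreducible chain has $\E[\tau]<\infty$.

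\textbf{Invariant measure from an excursion.} Define
$$\mu(v,q):=\E\Big[\sum_{n=0}^{\tau-1}\mathbbm{1}\{X_n=(v,q)\}\Big].$$
I will show that $\mu Q=\mu$, using the strong Markov property at $\tau$ together with the fact that $X_\tau$ has the same law as $X_0$. The latter holds because $X_\tau=(\orderp_{\tau+1},q_0)$ and $\orderp_{\tau+1}\sim\lefteigen$ is independent of $\mathcal F_\tau$. The usual computation then gives
$$\sum_{x}\mu(x)Q(x,y)=\E\Big[\sum_{n=1}^{\tau}\mathbbm{1}\{X_n=y\}\Big]=\mu(y),$$
since $\Prob(X_\tau=y)=\Prob(X_0=y)$. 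The total mass of $\mu$ is $\E[\tau]<\infty$.

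\textbf{Identifying $\mu$ with $\E[\tau]\Pi$.} For this I need uniqueness of the stationary distribution of $Q$. The chain $X$ is irreducible on its essential class: from any $(w,r)$ the next state is $(v,q)$ with $v\sim\lefteigen$ independent of $q$, and the $q$-component is irreducible, so every $(v,q)\in V\times S$ is reachable. Uniqueness then yields $\mu/\E[\tau]=\Pi$, with $\Pi$ as in Lemma~\ref{lem: stationary distribution}. This gives the claim.

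The main obstacle is the regeneration step, namely justifying that the excursions restart in the same law. The $X$-chain does not return to a single state, only to the set $A$, so I must argue via the independence of $\orderp_{\tau+1}$ from $\mathcal F_\tau$ rather than by a plain return to a fixed state. A cleaner alternative is to apply the standard cycle formula to the chain $(\statep_n)$ returning to the single state $q_0$. That gives
$$\E\Big[\sum_{n<\tau}\mathbbm{1}\{\statep_n=q\}\Big]=\E[\tau]\prod_u\pi_u(q(u)),$$
after checking that $\prod_u\pi_u$ is stationary for $\statep$ (the computation in Lemma~\ref{lem: stationary distribution} summed over $v$). Then I would multiply by $\lefteigen(v)$, using that $\{\statep_n=q,\ n<\tau\}\in\mathcal F_n$ is independent of $\orderp_{n+1}$. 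I would actually write this second route.
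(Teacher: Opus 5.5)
Your proof is correct, but it takes a different route from the paper's. The paper argues asymptotically, in renewal--reward style. The cycle sums $\sum_{n=\tau_j}^{\tau_{j+1}-1}\mathbbm{1}\{\orderp_{n+1}=v,\statep_n=q\}$ are i.i.d., so by the law of large numbers their mean is the limit of their empirical averages. That average is rewritten as $\tau_{l+1}/l$ times an ergodic average of the chain $(\orderp_{n+1},\statep_n)$ up to time $\tau_{l+1}$. The two factors converge to $\E[\tau]$ and to $\Pi(v,q)$ respectively. Your argument is non-asymptotic instead: the cycle formula says the excursion occupation measure is invariant, and uniqueness of the stationary law identifies it with $\E[\tau]$ times that law. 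Your second route is the cleaner one. Applying the textbook formula to $(\statep_n)$, which really returns to the single state $q_0$, avoids regenerating at the set $V\times\{q_0\}$. The factor $\lefteigen(v)$ then comes straight from the independence of $\orderp_{n+1}$ from the event $\{\statep_n=q,\ n<\tau\}$. So you only need the $q$-marginal $\prod_u\pi_u$ of Lemma~\ref{lem: stationary distribution}, not its full statement. You also verify explicitly the hypotheses the paper uses tacitly: irreducibility of $(\statep_n)$, hence $\E[\tau]<\infty$, and irreducibility of the pair chain. The paper's identification of the ergodic limit with $\Pi$ depends on exactly these. In exchange, the paper's version is short and reuses the i.i.d.-cycle structure that the toppling random walk $(Z_j)$ rests on anyway.
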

\begin{proof}
By the definition of $(\tau_j)_{j\in \N_0}$ together with the Markov property, the sequence of random variables
$\big(\sum_{n=\tau_j}^{\tau_{j+1}-1}\mathbbm{1}\{\orderp_{n+1}=v,\statep_n=q\}\big)_{j\in \N_0}$
is i.i.d. with distribution $
\sum_{n=0}^{\tau-1}\mathbbm{1}\{\orderp_{n+1}=v,\statep_n=q\}$.
The expectation of each sequence term exists, because
$\sum_{n=0}^{\tau-1}\mathbbm{1}\{\orderp_{n+1}=v,\statep_n=q\} \leq \tau$,
and $\E[\tau]<\infty$. Thus, the law of large numbers and the ergodic theorem for Markov chains yields
\begin{align*}
\E\Big[\sum_{n=0}^{\tau-1}\mathbbm{1}\{\orderp_{n+1}=v,\statep_n=q\}\Big] &= \lim_{l\to\infty} \frac{1}{l}\sum_{j=1}^l \sum_{n=\tau_{j}}^{\tau_{j+1}-1}\mathbbm{1}\{\orderp_{n+1}=v,\statep_n=q\}\\
&= \lim_{l\to\infty}\frac{1}{l}\sum_{n=0}^{\tau_{l+1}-1} \mathbbm{1}\{\orderp_{n+1}=v,\statep_n=q\}
\end{align*}
which in turn equals to
\begin{align*}
&= \lim_{l\to\infty}\frac{\tau_{l+1}}{l}\frac{1}{\tau_{l+1}}\sum_{n=0}^{\tau_{l+1}-1} \mathbbm{1}\{\orderp_{n+1}=v,\statep_n=q\} \\
&=\lim_{l\to\infty}\frac{\tau_{l+1}}{l}\cdot\lim_{k\to\infty}\frac{1}{\tau_{k}}\sum_{n=0}^{\tau_{k}-1} \mathbbm{1}\{\orderp_{n+1}=v,\statep_n=q\}
= \E[\tau]\Pi(v,q).
\end{align*}
\end{proof}
Combining Lemmas \ref{lem: stationary distribution} and \ref{lem: sum up to tau} yields the expected increments of the toppling random walk $(Z_j)_{j\in\mathbb{N}}$. The next result extends \cite[Lemma 3.3]{levine-greco2023} to higher dimensions, and the two proofs are similar.
\begin{prop} \label{prop: expectation}
It holds $\E[Z_1-Z_0] = \E[\tau]\rho\lefteigen$.
\end{prop}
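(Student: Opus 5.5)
We write the first increment as a sum of the individual topplings performed before the first return of the environment to $q_0$. Since $Z_0=\configurationp_0=\eta_0$ and $Z_1=\configurationp_{\tau}$, we have
\begin{align*}
Z_1-Z_0=\sum_{n=0}^{\tau-1}\big(\configurationp_{n+1}-\configurationp_n\big)=\sum_{n=0}^{\tau-1} I^{\orderp_{n+1}}_{m_{(\orderp_1,\dots,\orderp_n)}(\orderp_{n+1})}.
\end{align*}
The instruction applied at step $n+1$ at vertex $v=\orderp_{n+1}$ is sampled from $\nu^{v,s}$, where $s$ is the environment at $v$ right after the update at that step. Conditionally on the past, the pair $(\orderp_{n+1},\statep_{n+1})$ therefore determines the law of the increment, and its conditional mean is $\mu^{\orderp_{n+1},\statep_{n+1}(\orderp_{n+1})}$. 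Because $\tau$ is a stopping time and $\E[\tau]<\infty$, a Wald-type (optional stopping) argument applied to the martingale formed by the centred increments gives
\begin{align*}
\E[Z_1-Z_0]=\E\Big[\sum_{n=0}^{\tau-1}\mu^{\orderp_{n+1},\statep_{n+1}(\orderp_{n+1})}\Big]=\sum_{v\in V}\sum_{q\in S}\mu^{v,q(v)}\,\E\Big[\sum_{n=0}^{\tau-1}\mathbbm{1}\{\orderp_{n+1}=v,\statep_{n+1}=q\}\Big].
\end{align*}
The interchange is justified because the increments are integrable; we tacitly assume each $\nu^{v,s}$ has a finite mean.

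The main obstacle is a mismatch of indices: Lemma \ref{lem: sum up to tau} counts the pairs $(\orderp_{n+1},\statep_n)$, whereas the relevant environment is $\statep_{n+1}$. Consider the Markov chain $(\orderp_{n+1},\statep_{n+1})$. Its stationary law is still $\Pi$: if $\orderp_{n+1}=v$ and $\statep_n\sim\prod_u\pi_u$, then updating coordinate $v$ by $P_v$ preserves the product law, and $\orderp_{n+1}$ is independent of $\statep_n$. Over a full excursion the endpoints coincide, since $\statep_0=\statep_\tau=q_0$. Hence the same regenerative and ergodic argument as in Lemma \ref{lem: sum up to tau} yields
\[
\E\Big[\sum_{n=0}^{\tau-1}\mathbbm{1}\{\orderp_{n+1}=v,\statep_{n+1}=q\}\Big]=\E[\tau]\,\lefteigen(v)\prod_{u}\pi_u(q(u)).
\]
Alternatively, this follows directly from Lemma \ref{lem: sum up to tau} by conditioning on $(\orderp_{n+1},\statep_n)$ and applying $\pi_vP_v=\pi_v$.

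Substituting this into the previous display and summing out the coordinates $u\neq v$ gives
\[
\E[Z_1-Z_0](w)=\E[\tau]\sum_{v}\lefteigen(v)\sum_{s\in S_v}\pi_v(s)\mu^{v,s}(w)=\E[\tau]\,(\lefteigen M)(w).
\]
Finally, $\lefteigen$ is a left eigenvector of $M+\alpha I$ with eigenvalue $r(M+\alpha I)$, so $\lefteigen M=(r(M+\alpha I)-\alpha)\lefteigen=\rho\lefteigen$. This gives $\E[Z_1-Z_0]=\E[\tau]\rho\lefteigen$.
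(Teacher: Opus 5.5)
Your proof is correct and follows the paper's route: you decompose $Z_1-Z_0$ over the first excursion, replace each instruction by its conditional mean, apply the occupation identity $\E[\tau]\Pi(v,q)$, marginalize over the coordinates $u\neq v$, and use $\lefteigen M=\rho\lefteigen$. The paper justifies the mean replacement by conditioning on $\tau$, which is legitimate because $\tau$ depends only on the vertex and environment sequences; you use a Wald argument instead. The index shift you handle is a matter of convention: in the paper's stack representation the instruction $I^v_{h(v)}\sim\inst^{v,Y^v_{h(v)}}$ used at step $n+1$ is governed by $\statep_n(v)$, so there Lemma \ref{lem: sum up to tau} applies directly, and your check on $(\orderp_{n+1},\statep_{n+1})$ shows the result does not depend on this choice.
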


\begin{proof}
Without loss of generality, assume $Z_0=0$. Then
$$
Z_1 = \sum_{n=0}^{\tau-1} \sum_{v\in V} \sum_{q\in S} \mathbbm{1}\{ \orderp_{n+1} = v,\statep_n = q\}I_{m_{(\orderp_1,...,\orderp_{n})}(v)}^{v}.
$$
Taking the conditional expectation with respect to $\tau$ yields
\begin{align*}
    \E[Z_1|\tau] &= \sum_{n=0}^{\tau-1} \sum_{v\in V} \sum_{q\in S}\E\Big[\mathbbm{1}\{\orderp_{n+1} = v,\statep_n = q\}I_{m_{(\orderp_1,...,\orderp_{n})}(v)}^{v}\Big|\tau\Big]\\
        &=\sum_{n=0}^{\tau-1} \sum_{v\in V} \sum_{q\in S}\E\Big[\mathbbm{1}\{\orderp_{n+1} = v,\statep_n = q\}\Big|\tau\Big]\mu^{v,q(v)} \\
    &= \sum_{v\in V} \sum_{q\in S}\E\Big[\sum_{n=0}^{\tau-1}\mathbbm{1}\{\orderp_{n+1} = v,\statep_n = q\}\Big|\tau\Big]\mu^{v,q(v)}.
\end{align*}
Seting $S_{\neq v} = \prod_{w\neq v} S_w$ for $v\in V$, and taking again the expectation on both sides above yields
\begin{align*}
\E[Z_1] &= \sum_{v\in V} \sum_{q\in S}\E\Big[\sum_{n=0}^{\tau-1}\mathbbm{1}\{\orderp_{n+1} = v,\statep_n = q\}\Big]\mu^{v,q(v)}
= \sum_{v\in V} \sum_{q\in S} \E[\tau]\Pi(v,q)\mu^{v,q(v)} \\
&= \E[\tau] \sum_{v\in V} \sum_{q\in S} \lefteigen(v)\Big(\prod_{u\in V}\pi_u(q(u))\Big)\mu^{v,q(v)}\\
&
 = \E[\tau]\sum_{v\in V} \sum_{s\in S_v} \sum_{r\in S_{\neq v}} \lefteigen(v)\pi_v(s)\Big(\prod_{u\neq v}\pi_u(r(u))\Big)\mu^{v,s} \\
&= \E[\tau]\sum_{v\in V}\lefteigen(v)\sum_{s\in S_v}\pi_v(s)\mu^{v,s}\sum_{r\in S_{\neq v}} \Big(\prod_{u\neq v}\pi_u(r(u))\Big) \\
&= \E[\tau]\sum_{v\in V}\lefteigen(v)\sum_{s\in S_v}\pi_v(s)\mu^{v,s} = \E[\tau]\sum_{v\in V}\lefteigen(v) M(v,-) =\E[\tau]\rho \lefteigen.
\end{align*}
\end{proof}

\section{Proof of the Theorem \ref{thm:main}}\label{sec:main-thm}

Knowing the expected increments of the toppling random walk $(Z_j)_{j\in\N}$ allows to prove the stabilization/explosion of the stochastic network in Markovian environment $(\eta_n)_{n\in \N}$ in the three regimes - subcritical ($\rho<0$), supercritical ($\rho>0$), and critical ($\rho=0$).

\textbf{The subcritical regime.} This is the easiest to prove and is a direct consequence of the two previous sections.
\begin{prop}[Subcritical Case]\label{prop:subcritical}
If $\rho<0$, then for any initial configuration $\eta_0:V\to\N_0$ the sequence $(\hat{\eta}_n)_{n\in\N}$ stabilizes almost surely in finite time.
\end{prop}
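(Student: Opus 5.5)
The plan is to use the toppling random walk $(Z_j)_{j\in\N}$, i.e.\ $\hat{\eta}$ observed at the return times $\tau_j$ to $q_0$, together with a strong law of large numbers. The walk drifts to $-\infty$ in every coordinate, and the bounded-from-below structure then forces a stable configuration to appear at some finite time.

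\textbf{Step 1: the increments are i.i.d.\ and integrable.} By the strong Markov property at the times $\tau_j$, the increments $Z_{j+1}-Z_j$ are i.i.d. The chain $(\hat q_n)$ is an irreducible Markov chain on the finite set $S$. It is irreducible because $\mathsf p>0$ and each $P_v$ is irreducible, so every coordinate can be moved independently. Hence $\tau$ has exponential tails and $\E[\tau]<\infty$.

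For integrability, each coordinate of an increment is bounded in absolute value by a sum of at most $\tau$ instruction magnitudes. Under \eqref{eq:bfb} the negative parts are bounded by $K$. For the positive parts I would assume, as the paper implicitly does when defining $\mu^{v,s}$, that each $\nu^{v,s}$ has finite first moment. A Wald-type bound, summing over $n<\tau$ with $\{n<\tau\}$ independent of the next instruction, then gives $\E|Z_1-Z_0|<\infty$.

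\textbf{Step 2: the walk drifts to $-\infty$.} By Proposition \ref{prop: expectation}, $\E[Z_1-Z_0]=\E[\tau]\rho\mathsf p$. Every coordinate of this vector is strictly negative, since $\rho<0$ and $\mathsf p>0$ by Lemma \ref{lem:primitive}. The strong law of large numbers then gives $Z_j/j\to \E[\tau]\rho\mathsf p$ almost surely. Consequently $Z_j(v)\to-\infty$ for every $v\in V$ almost surely, and in particular $Z_j(v)\le \thresh(v)$ for all $v$ once $j$ is large.

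\textbf{Step 3: conclude stabilization.} Take $n=\tau_j$ with $j$ finite but large enough. Then $\hat{\eta}_n=\toppledconf_{(\mathsf w_1,\dots,\mathsf w_n)}(\eta_0)$ satisfies $\hat\eta_n\le\thresh$, so it is stable, and this happens at an almost surely finite time.

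There is one subtlety: $\hat\eta$ may become negative because illegal topplings are allowed. This does not matter, since stability only requires $\hat\eta_n\le \thresh$. The statement asks only that the sequence $(\hat\eta_n)$ stabilize, meaning that a stable configuration is reached in finite time. One could take the first such time as a stopping time, which is what Proposition \ref{prop:illegal-implies-legal} needs afterwards to transfer the conclusion to the legal dynamics $(\eta_n)$.

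The main obstacle I expect is the integrability in Step 1, specifically controlling the positive parts of the increments. The negative parts are bounded by $K\tau$ thanks to \eqref{eq:bfb}. For the positive parts, the key is that $\tau$ is a stopping time with respect to a filtration under which the next instruction is independent of the past. I would try Wald's identity applied to $\sum_{n<\tau}|I^{\mathsf w_{n+1}}_{\cdot}|$, with $\E[\tau]<\infty$ and $\max_{v,s}\E|\inst^{v,s}|<\infty$. If only $\E[Z_1]$ is known to exist in the extended sense, the strong law of large numbers still applies to a coordinate whose positive part is integrable. This is enough here, because the mean is negative and finite.
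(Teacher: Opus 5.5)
Your proof is correct and is the same argument as the paper's. The paper's proof is a one-liner: by Proposition \ref{prop: expectation}, the toppling walk $Z_j=\hat\eta_{\tau_j}$ has strictly negative drift in every coordinate, so almost surely it drops below $\thresh$ at some $\tau_j$; you make explicit the i.i.d.\ increments, $\E[\tau]<\infty$, and the strong law of large numbers that this relies on. Your integrability worry is not an extra assumption: by \eqref{eq:only-local-reduction} and \eqref{eq:bfb} every coordinate of $\inst^{v,s}$ is bounded below, so finiteness of $\mu^{v,s}\in\R^V$ already gives $\E|\inst^{v,s}|<\infty$.
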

\begin{proof}Since $\rho<0$,  almost surely exists some $j\in\N$ such that
$\hat{\eta}_{\tau_j}=Z_j<\thresh$,
in view of Proposition \ref{prop: expectation}, and this completes the proof.
\end{proof}

\textbf{The supercritical regime.} The supercritical case needs extra care. As in Proposition \ref{prop:supercritical}, Lemma \ref{prop: expectation} implies that, with positive probability, the toppling random walk $(Z_j)_{j\in\mathbb{N}}$ eventually crosses the threshold $\thresh$ and stays above it forever. For the particle configuration, this only guarantees that $(\hat{\eta}_j)_{j\in\mathbb{N}}$ remains above $\thresh$ at the stopping times $\tau_j$. Hence, we must additionally show that, with positive probability, the configuration does not fall below $\thresh$ at any times between successive times.

We call an initial state $(\eta, q) \in \mathbb{Z}^V \times S$ \emph{viable} if there exists $j\in\N$ such that
$$
\mathbb{P}\big(\hat{\eta}_{\tau_j} - \eta \ge 1 \ \text{and}\ \hat{\eta}_k \ge \thresh \ \text{with}\ \hat{\eta}_k \ne \thresh \ \text{for all } k \le \tau_j\big) > 0,
$$
where $\tau_j$ is the $j$-th return time of the environment to $q$. Viable configurations are central to the supercritical case; we establish that such configurations exist.
 
\begin{lemma}\label{lem:viable}
If $\rho>0$, then for any initial environment $q\in S$ there exists an initial configuration $\eta\in\N^V$ such that $(\eta,q)$ is viable.
\end{lemma}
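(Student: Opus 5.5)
Fix the initial environment $q$ and work with the toppling random walk $(Z_j)_{j\in\N}$ started from $Z_0=\eta$ and driven by the $\mathsf{p}$-sampled sequence $(\mathsf{w}_i)$. By Proposition \ref{prop: expectation}, $\E[Z_1-Z_0]=\E[\tau]\rho\lefteigen$. Since $\rho>0$ and $\lefteigen>0$ coordinatewise (Lemma \ref{lem:primitive} and Perron--Frobenius), every coordinate of the mean increment is strictly positive. The plan is to exhibit a single finite $j$ and a finite event of positive probability on which all coordinates have gained at least one particle by time $\tau_j$. We then choose $\eta$ so large that no coordinate can drop to or below $\thresh$ before $\tau_j$ on that event. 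A key point is that the event will be constructed independently of $\eta$, because the increments $\hat\eta_k-\eta$ do not depend on $\eta$ (the stack and the sampled sequence are independent of the configuration).

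\textbf{Step 1: a good finite event.} Since $Z_1-Z_0$ has mean $\E[\tau]\rho\lefteigen$ with all coordinates positive, the strong law of large numbers applied to the i.i.d.\ increments gives $Z_j-Z_0\to+\infty$ in every coordinate almost surely. Hence there exists $j$ with $\Prob(Z_j-Z_0\ge 1)>0$; in fact this probability tends to $1$. If one is worried about integrability of the increments, the law of large numbers still applies because each coordinate of the increment is bounded below: by \eqref{eq:only-local-reduction} and \eqref{eq:bfb}, each step decreases each coordinate by at most $K$, and $\tau$ is integrable. So the expectation is well defined, possibly equal to $+\infty$ in some coordinate, which only helps.

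\textbf{Step 2: controlling intermediate times.} On the event $\{Z_j-Z_0\ge 1\}$, which has positive probability, $\tau_j$ is finite, so there is a deterministic $N$ with
$$\Prob\big(Z_j-Z_0\ge 1,\ \tau_j\le N\big)>0.$$
On $\{\tau_j\le N\}$, each coordinate of $\hat\eta_k-\eta$ is at least $-KN$ for all $k\le\tau_j$, again by \eqref{eq:only-local-reduction} and \eqref{eq:bfb}. Now choose $\eta:=\thresh+(KN+1)\mathbf 1$. Then $\eta\in\N^V$, and for all $k\le\tau_j$ we have $\hat\eta_k\ge\thresh+1$, so in particular $\hat\eta_k\ge\thresh$ and $\hat\eta_k\ne\thresh$. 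Since $N$ was fixed before $\eta$, and the law of the increments does not depend on $\eta$, this choice is legitimate and $(\eta,q)$ is viable.

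\textbf{Main obstacle.} The delicate point is Step 1: justifying that $Z_j-Z_0\ge1$ in all coordinates simultaneously with positive probability. This requires the strong law of large numbers for increments whose positive parts might not be integrable, and it requires $\E[\tau]<\infty$, which holds because $\hat q$ is a finite irreducible chain. I would handle possibly infinite means by truncating the increments from above, which preserves a positive mean after truncating at a high enough level, and then applying the law of large numbers to the truncated increments.
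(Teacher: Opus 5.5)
Your proposal is correct and follows essentially the paper's route. Both use the positive drift $\E[\tau]\rho\lefteigen$ to find some $j$ with $\Prob(Z_j-Z_0\ge 1)>0$. Both then bound the number of topplings before $\tau_j$ (you by restricting to $\{\tau_j\le N\}$, the paper by fixing finitely many excursions of total length $\sum_k\mathsf{len}(c_k)$) and pad $\thresh$ by $K$ times that bound using \eqref{eq:only-local-reduction} and \eqref{eq:bfb}.

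Your extra $+1$ in $\eta=\thresh+(KN+1)\mathbf{1}$ is a slight improvement: it also guarantees $\hat\eta_0\neq\thresh$ when $K=0$. Your worry about infinite means is unnecessary, since the $\mu^{v,s}$ are assumed finite.
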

\begin{proof}
Fix the initial environment $q\in S$, and call a sequence of environments $(q_0,...,q_l)$ an excursion, if $q_l=q_0=q$ and $q_i\neq q$ for $i\in\{1,...,l-1\}$. Define
$$
\mathsf{Cyc} := \Big\{ \big((\eta_0,\ldots,\eta_l), (q_0,\ldots,q_l)\big) :\ l \in \mathbb{N},\ \forall i \le l:\ \eta_i \in \mathbb{Z}^V,\ \eta_0 = 0,\ (q_0,\ldots,q_l)\ \text{is an excursion} \Big\}.
$$
For any $c=((\eta_0,\ldots,\eta_l),(q_0,\ldots,q_l)) \in \mathsf{Cyc}$, set $H(c)=\eta_l$, thus $H$ denotes the final particle configuration in an excursion.
Recall that $(\eta_{\tau_k})_{k\in\N}$ is a random walk with expected step size $\rho\,\mathsf{p}\,\mathbb{E}[\tau]>0$. Therefore, there exists $j\in\N$ such that
$$\mathbb{P}(\forall v\in V:\eta_{\tau_j}(v)\geq 1)>0.$$
This implies there exist excursions $c_1,...,c_j\in\mathsf{Cyc}$ with $\sum_{k=1}^j H(c_k)>1.$ Let $K\in\N$ be the constant from \eqref{eq:bfb}. Setting $c=((\eta_0,\ldots,\eta_l),(q_0,\ldots,q_l))\in \mathsf{Cyc}$  and its length $\mathsf{len}(c)=l$, then  the initial configuration $\thresh+K\sum_{k=1}^j\mathsf{len}(c_k)$ is viable for the environment $q$.
\end{proof}

\begin{prop}[Supercritical case]\label{prop:supercritical}
If $\rho > 0$, then for every environment $q \in S$ there exists an initial configuration $\eta \ge \thresh$ such that
$$
\mathbb{P}\big(\forall n \in \mathbb{N}:\ \hat{\eta}_n \ge \thresh \ \text{and}\ \hat{\eta}_n \ne \thresh\big) > 0.
$$
\end{prop}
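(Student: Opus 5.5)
We plan to combine Lemma~\ref{lem:viable} with a positive-drift argument for the toppling random walk $(Z_j)_{j\in\N}=(\hat{\eta}_{\tau_j})_{j\in\N}$, using the strong Markov property at the return times $\tau_j$. Fix the environment $q$. By Lemma~\ref{lem:viable} there is a configuration $\eta^\ast$ with $(\eta^\ast,q)$ viable. The key observation is monotonicity in the initial configuration. The stack $\mathcal{I}$ and the sampled sequence $(\orderp_i)$ do not depend on $\eta_0$, so $\hat{\eta}_n-\eta_0$ is independent of $\eta_0$. Hence if $(\eta^\ast,q)$ is viable, then so is $(\eta',q)$ for every $\eta'\ge\eta^\ast$, with the same $j$ and at least the same probability. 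Call this probability $\delta>0$ and the associated event $A(\eta)$: over one block of $j$ excursions the configuration gains at least $1$ at every vertex and stays $\ge\thresh$ and $\ne\thresh$ throughout.

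Next we obtain a lower bound valid for all large configurations. By \eqref{eq:bfb}, each toppling lowers the mass at any vertex by at most $K$. Thus during the first $j$ excursions we have $\hat{\eta}_k\ge \eta_0-K\tau_j$ coordinatewise. Choose $L$ such that $\Prob(\tau_j\le L)\ge 1-\delta/2$. Then for $\eta_0\ge \thresh+KL+\mathbf 1$, the configuration stays strictly above $\thresh$ during the block with probability at least $1-\delta/2$. What remains is to control the behaviour of the block-endpoints $Z_{jk}$, which form a random walk with drift $j\rho\,\E[\tau]\lefteigen>0$. Here we use that $\lefteigen>0$ (Lemma~\ref{lem:primitive}) and Proposition~\ref{prop: expectation}.

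The main step is to show that with positive probability the whole trajectory $(\hat{\eta}_n)$ stays strictly above $\thresh$ forever. We plan to start from $\eta=\eta^\ast+N\mathbf 1$ with $N$ large, to be fixed later. Consider the block increments $D_k=Z_{jk}-Z_{j(k-1)}$ and the minimal in-block dips $m_k=\min_{\tau_{j(k-1)}\le n\le\tau_{jk}}(\hat{\eta}_n-Z_{j(k-1)})\ge -K(\tau_{jk}-\tau_{j(k-1)})$. By the strong Markov property the pairs $(D_k,m_k)$ are i.i.d. The trajectory stays above $\thresh$ if for all $k$ and all $v$
\[ N+\eta^\ast(v)-\thresh(v)+\sum_{i<k}D_i(v)+m_k(v)>0. \]
Since $\E[D_1(v)]>0$ and $m_k$ is dominated by $K$ times a return time, which has exponential moments (finite state space, irreducible environment chain), the walk $\sum_{i<k}D_i(v)-c\,k$ has positive drift for small $c>0$. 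Borel--Cantelli gives $m_k(v)\ge -ck$ eventually. By the strong law of large numbers, $\inf_k(\sum_{i<k}D_i(v)+m_k(v))$ is then almost surely finite. Taking $N$ large makes the probability that this infimum exceeds $-(N+\eta^\ast(v)-\thresh(v))$ for all $v$ positive, and in fact close to $1$. This gives the claim with $\eta=\eta^\ast+N\mathbf 1\ge\thresh$.

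The main obstacle is the uniform control of the in-block dips. It needs integrability of the excursion length $\tau$, which we obtain from geometric tails since $(\hat q_n)$ is an irreducible finite chain. If the infimum argument turns out to be delicate, a fallback is available. We would use the viability event $A$ repeatedly at a finite number of initial blocks to climb to a level where the Borel--Cantelli and law of large numbers bound already holds, gluing the pieces with the strong Markov property at $\tau_{jk}$.
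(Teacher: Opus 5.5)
Your proof is correct, but it takes a genuinely different and softer route than the paper.

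The paper starts at a viable configuration $\eta$ and applies viability $n$ times via the strong Markov property. This reaches $\hat{\eta}_{\tau_{jn}}\ge\eta+n$ with probability at least $\delta^n$. It then shows that a later dip has probability less than $1$ by covering it with two events: $B_k$ (the walk $Z_k$ lies below a line of small slope) and $C_k$ (the $k$-th excursion is long). $C_k$ is handled by Borel--Cantelli using $\E[\tau]<\infty$. $B_k$ is handled by an exponential Chebyshev (Chernoff) bound, which needs exponential moments of $\tau$.

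You instead use that $\hat{\eta}_n-\hat{\eta}_0$ does not depend on $\hat{\eta}_0$, so $X_k(v)=\sum_{i<k}D_i(v)+m_k(v)$ has a law independent of $N$. The strong law of large numbers, together with Borel--Cantelli for the in-block dips, gives $X_k(v)\to+\infty$. Hence $\inf_k X_k(v)>-\infty$ almost surely, and a large enough shift $N$ makes the survival probability close to $1$.

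What your route buys: it needs only first moments, and it avoids the paper's somewhat delicate conditioning on the inequality event $\mathcal{E}_n$. What the paper's route buys: quantitative control, exponential in $n$. It also gives a conclusion valid from every viable starting configuration $\eta\ge\thresh$; your fallback would recover this by exactly the paper's climbing step.

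Your main step never uses Lemma~\ref{lem:viable} or the choice of $L$ in your second paragraph. Taking $\eta=\thresh+N\mathbf{1}$ with $N$ large already works, even with blocks consisting of a single excursion.
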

\begin{proof}
This  is a generalization of the proof of  \cite[Theorem 3.7]{levine-greco2023}.
 For the environment $q\in S$, let $\eta\geq\thresh$ be a particle configuration such that $(\eta,q)$ is viable. Furthermore, let $j \in \mathbb{N}$ denote the number of returns to $q$ required for every component of $\eta$ to increase by $1$.
Set
$$\delta:=\mathbb{P}\big(\hat{\eta}_{\tau_j}\geq\eta+1\text{ and for all }k\leq\tau_j\text{ it holds }\hat{\eta}_k\geq \thresh\text{ and }\hat{\eta}_k\neq \thresh\big)>0.$$
Using the strong Markov property we obtain
$$\Prob\big(\configurationp_{\tau_{jn}}\geq \eta+n \text{ and for all } k\leq\tau_{jn}\text{ it holds }\configurationp_k\geq \thresh \text{ and }\configurationp_k\neq \thresh\big) \geq \delta^n > 0$$
for every $n\in\N$.
For $\mathcal{E}_n=\lbrace \configurationp_{\tau_{jn}} \geq \eta+n \rbrace$, again by the strong Markov property for $\tau_{jn}$ we get
\begin{align*}
    \Prob\big(\configurationp_k\geq \thresh & \text{ and } \configurationp_k\neq \thresh \text{ for all } k\in \N_0\big)\\
    &\geq \Prob\big(\configurationp_k\geq \thresh \text{ and } \configurationp_k\neq \thresh  \text{ for all } k>\tau_{jn}\,|\, \mathcal{E}_n,\configurationp_m\geq \thresh,\configurationp_m\neq \thresh\text{ for all }m\leq \tau_{jn}\big)\cdot\delta^n\\
    &=\Prob\big(\configurationp_k\geq \thresh \text{ and } \configurationp_k\neq\thresh \text{ for all } k>\tau_{jn}\,|\, \mathcal{E}_n\big)\cdot\delta^n.
    \end{align*}
So it suffices to find $n\in\N$ such that the right hand side above is strictly positive. By considering the complementary event, we search for an $n\in\N$ such that
$\Prob(\configurationp_k\leq \thresh \text{ for some } k\geq \tau_{jn}\,|\,\mathcal{E}_n)<1$. 
Set $\lefteigen_{\min}:=\min_{v\in V}\lefteigen(v) > 0$ and consider the following events: for $k\in\N$
\begin{align*}
 &A_k=\lbrace \configurationp_m \leq \thresh\text{ for some }m\in (\tau_k,\tau_{k+1}]\rbrace,\\
 &B_k =\Big\lbrace \configurationp_{\tau_k}\leq\thresh + \frac{\rho}{L}\lefteigen\E[\tau]k\Big\rbrace,\\
  &C_k = \Big\lbrace \tau_{k+1}-\tau_k\geq\frac{\rho\lefteigen_{\min}}{LK}\E[\tau]k\Big\rbrace,
\end{align*}
with $L\in\N$ chosen such that
$j(\rho/L)\lefteigen\E[\tau]<1$.
Observe that
$\lbrace \configurationp_k \leq \thresh \text{ for some }k>\tau_{jn}\rbrace = \bigcup_{k=jn}^\infty A_k$, and for each $k\in \N_0$ it holds
$A_k\subset B_k\cup C_k.$
This is due to \eqref{eq:bfb}; if it would  exist  $v\in V$ with
$$\configurationp_{\tau_k}(v) > \thresh(v) + \frac{\rho}{L}\lefteigen(v)\E[\tau]k,\quad \text{such that} \quad 
\tau_{k+1}-\tau_k<\frac{\rho\lefteigen_{\min}}{LK}\E[\tau]k,$$
then for every $m\in(\tau_k,\tau_{k+1}]$ it would hold
$$\configurationp_m(v) >  \thresh(v)+\frac{\rho}{L}\lefteigen(v)\E[\tau]k - K(m-\tau_k)\geq \thresh(v)+\frac{\rho}{L}\lefteigen(v)\E[\tau]k-K\frac{\rho\lefteigen_{\min}}{LK}\E[\tau]k\geq \thresh(v),$$
thus $B_k^c\cap C_k^c\subset A_k^c$.
So we need to find an $n\in\N$ such that
$\Prob\big(\bigcup_{k=jn}^\infty A_k\,\big|\, \mathcal{E}_n\big)<1$.
We first show
 \begin{equation}\label{eq: thm supercritical eq 1}
 \Prob\Bigl(\bigcup_{k=jn}^\infty C_k\,\Big|\,\mathcal{E}_n\Bigr)<\frac{1}{2}
\end{equation}
for $n$ big enough. For any $k\geq jn$ the event $C_k$ is independent of $\mathcal{E}_n$ by the strong Markov property,  hence since all $\tau_{k+1}-\tau_k$ have the same distribution as $\tau$, it holds 
 $\sum_{k=1}^\infty\Prob(C_k)<\infty$,
which together with an union bound implies the existence of $n_1\in\N$ such that  \eqref{eq: thm supercritical eq 1} holds.
Next, we show
\begin{equation}\label{eq: thm supercritical eq 2}
 \Prob\Big(\bigcup_{k=jn}^\infty B_k\,\Big|\,\mathcal{E}_n\Big)<\frac{1}{2},
 \end{equation}
 for $n$ big enough. We have
    \begin{align*}
        \Prob\Big(\exists k\geq jn:&\  \configurationp_{\tau_k}\leq \frac{\rho}{L}\mathsf{p}\E[\tau]k + \thresh\,\Big|\, \configurationp_{\tau_{jn}}\geq \eta+n\Big)\\
        &=\Prob\Big(\exists k\geq jn: (\configurationp_{\tau_k}-\configurationp_{\tau_{jn}})+\configurationp_{\tau_{jn}}\leq \frac{\rho}{L}\mathsf{p}\E[\tau]k + \thresh\,\Big|\, \configurationp_{\tau_{jn}}\geq \eta+n\Big)\\
        &\leq \Prob\Big(\exists k\geq jn: (\configurationp_{\tau_k}-\configurationp_{\tau_{jn}})+\eta + n\leq \frac{\rho}{L}\mathsf{p}\E[\tau]k+ \thresh\,\Big|\, \configurationp_{\tau_{jn}}\geq \eta+n\Big)\\
        &= \Prob\Big(\exists k\geq jn: (\configurationp_{\tau_k}-\configurationp_{\tau_{jn}})\leq \frac{\rho}{L}\mathsf{p}\E[\tau]k+ (\thresh - \eta) - n\,\Big|\, \configurationp_{\tau_{jn}}\geq \eta+n\Big)\\
        &=  \Prob\Big(\exists k\geq 0: Z_k-Z_0\leq \frac{\rho}{L}\mathsf{p}\E[\tau]k-n\Big(1-j\frac{\rho}{L}\mathsf{p}\mathbb{E}[\tau]\Big)\Big).
    \end{align*}
To show that \eqref{eq: thm supercritical eq 2} holds, it remains to upper bound the right-hand side above. Since $(Z_k)_{k}$ is a random walk on $\Z^V$, the projection on the $v$-th component is a random walk on $\Z$, which we denote by  $(R_k)_{k\in \N}$ for some fixed $v\in V$. Note that 
$\E[R_1-R_0]=\rho\mathsf{p}(v)\E[\tau]>0$.
Assume $R_0 = 0$ and set $C=1-j(\rho/L)\mathsf{p}(v)\mathbb{E}[\tau]>0$. Then
 \begin{align*}
\Prob\Big(\exists k\geq 0: Z_k\leq \frac{\rho}{L}\mathsf{p}\E[\tau]k-& n\big(1-j\frac{\rho}{L}\mathsf{p}\mathbb{E}[\tau]\big)\Big) \leq \Prob\Big(\exists k\geq 0:R_k\leq \frac{\rho}{L}\mathsf{p}(v)\E[\tau]k-Cn\Big)\\
    &\leq \sum_{k=0}^\infty \Prob\Big(R_k-\rho\mathsf{p}(v)\E[\tau]k\leq\big(\frac{\rho}{L}-\rho\big)\mathsf{p}(v)\E[\tau]k-Cn\Big)\\
    &= \sum_{k=0}^\infty \Prob\Big(-(R_k-k\E[R_1])\geq \big(\rho-\frac{\rho}{L}\big)\mathsf{p}(v)\E[\tau]k+Cn\Big).
\end{align*}
Setting $\hat{R}_k:=-R_k+k\E[R_1]$, then $(\hat{R}_k)_k$ is a centered random walk on $\Z$, and \eqref{eq:bfb} implies
$\hat{R}_1\leq \tau K+\rho\lefteigen(v)\E[\tau]$.
Since $\tau$ is the first return time of a  Markov chain on a finite state space, there exists $\Tilde{t}>0$, such that $\E\big[\e^{\Tilde{t}\tau}\big] < \infty.$
Setting $\hat{t}:=\Tilde{t}/K>0$ we get
$\E\big[\e^{\hat{t}\hat{R}_1}\big] \leq \E\big[\e^{\Tilde{t}\tau}\big]\e^{\hat{t}\rho\lefteigen(v)\E[\tau]} < \infty$.
Thus the logarithmic moment generating function
$\Lambda:[0,\hat{t}]\to \R:t\mapsto \log\E\big[\e^{t\hat{R}_1}\big]$
is continuously differentiable on $(0,\hat{t})$, fulfills $\Lambda'(0+) = \E[\hat{S}_1]=0$, and is strictly convex.
Set $c:=(\rho-\rho/L)\lefteigen(v)\E[\tau]>0$, choose an arbitrary $t\in [0,\hat{t}]$, and apply the Markov inequality to obtain
$$\Prob(\hat{R}_k\geq ck+Cn)\leq\frac{\E[\e^{t\hat{R}_k}]}{e^{t(ck+Cn)}}=\e^{-Ctn}\Big(\frac{\E[\e^{t\hat{R}_1}]}{\e^{tc}}\Big)^k = \e^{-Ctn}\e^{-(ct-\Lambda(t))k}.$$
Since $\Lambda(0)=\Lambda'(0+)=0$ and $\Lambda$ is strictly convex, it exists $t_*>0$ with  $I_*:=ct_*-\Lambda(t_*)>0$.
Therefore
\begin{align*}
\Prob\Big(\exists k\geq 0: Z_k\leq \frac{\rho}{2}\mathsf{p}\E[\tau]k-n\Big(1-j\frac{\rho}{L}\mathsf{p}\mathbb{E}[\tau]\Big)\Big)
 \leq \e^{-Ct_*n}\sum_{k=0}^\infty \e^{-I_*k}
= \e^{-Ct_*n}\frac{1}{1-\e^{-I^*}},
\end{align*}
which implies the existence of $n_2\in\N$ such that
$\mathbb{P}\big(\cup_{k=jn_2}^\infty B_k\, |\,\mathcal{E}_{n_2}\big)<\frac{1}{2}$.
For $n_3=\max\{n_1,n_2\}$
\begin{align*}
\Prob\Big(\bigcup_{k=jn_3}^\infty A_k\,\Big|\,\mathcal{E}_{n_3}\Big)\leq\Prob\Big(\bigcup_{k=jn_3}^\infty B_k\cup C_k\,\Big|\,\mathcal{E}_{n_3}\Big)
\leq\Prob\Big(\bigcup_{k=jn_3}^\infty B_k\,\Big|\,\mathcal{E}_{n_3}\Big)+\Prob\Big(\bigcup_{k=jn_3}^\infty C_k\,\Big|\,\mathcal{E}_{n_3}\Big)<\frac{1}{2}+\frac{1}{2}<1.
\end{align*}
\end{proof}

\textbf{The critical regime.}
When $\rho=0$, the total mass may be conserved during topplings, which can cause non-stabilization for sufficiently large initial configurations. To formalize this, we define conserved quantities: a function $a \in \mathbb{R}^V$ and functions $\varphi(v,\cdot): S_v \to \mathbb{R}$ constitute a conserved quantity if, almost surely for all initial configurations,
$$
\sum_{v \in V} \big(a(v)\,\eta_n(v) + \varphi(v, q_n(v))\big) = \text{const.}
$$
We first analyze the properties of $a$.

\begin{lemma}\label{lem:apos}
If $a \in \mathbb{R}^V$ and $\varphi(v,\cdot): S_v \to \mathbb{R}$ form a conserved quantity for the stochastic network $(\eta_n)_n$, then $a$ is entrywise positive; that is, $a(v) > 0$ for all $v \in V$.
\end{lemma}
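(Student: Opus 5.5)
Implicitly the conserved quantity is nontrivial: $a\neq 0$, since otherwise $\sum_v\varphi(v,q_n(v))$ alone is constant and gives no information about $\eta$. The plan is to read off local identities from conservation along single topplings and combine them with \eqref{eq:only-local-reduction} and \eqref{eq:irr}, through a Perron--Frobenius argument for $M+\alpha I$.

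\textbf{Step 1 (local identities).} Suppose vertex $v$ topples with environment moving from $s$ to $s'$ (so $P_v(s,s')>0$) and instruction $\inst\sim\nu^{v,s'}$. Conservation gives, almost surely,
$$\sum_{w\in V}a(w)\,\inst(w)+\varphi(v,s')-\varphi(v,s)=0 .$$
Every such pair $(s,s')$ and every instruction in the support of $\nu^{v,s'}$ can occur with positive probability from some initial configuration. To see this, take $\eta_0$ large at $v$ and choose $q_0$ freely; the initial environment is arbitrary and ``for all initial configurations'' is assumed. The identity therefore holds for all such data.

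\textbf{Step 2 (averaging).} Average over $s\sim\pi_v$, $s'\sim P_v(s,\cdot)$ and $\inst$. Since $\pi_v$ is stationary, the $\varphi$ terms cancel in expectation, leaving $\sum_w M(v,w)a(w)=0$ for every $v$. So $Ma=0$, i.e. $(M+\alpha I)a=\alpha a$ with $a$ a right eigenvector. By Lemma \ref{lem:primitive}, $M+\alpha I$ is primitive.

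\textbf{Step 3 (sign of $a$, main obstacle).} For primitive matrices, the only eigenvectors with a sign-definite nonzero component are multiples of the Perron vector. So an eigenvector that is not of one sign could exist only for an eigenvalue $\alpha\ne r(M+\alpha I)$, and I must rule this out directly. I would try a maximum principle using the fact that a conserved quantity also holds pathwise, not just on average.

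Let $v$ minimize $a(v)$ and suppose $a(v)\le 0$. From $Ma=0$,
$$M(v,v)a(v)=-\sum_{w\ne v}M(v,w)a(w).$$
With $M(v,w)\ge0$ by \eqref{eq:only-local-reduction}, this still leaves sign ambiguity, so I switch to the pathwise identity. The identity in Step 1 also applies to sequences of topplings, so the change in $\sum_w a(w)\eta(w)$ is bounded by the finite oscillation of $\varphi$ uniformly in time. Now start from a configuration that forces many topplings, as in the supercritical or critical direction, and use \eqref{eq:bfb}: mass at $v$ cannot go below $0$. This should prevent unbounded mass from piling onto vertices with $a\le 0$ while the $a$-weighted mass stays bounded.

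Concretely, \eqref{eq:irr} gives a positive-probability path sending mass from any vertex to any other, and legal topplings at $v$ remove at most $\thresh(v)$ while other vertices only gain. Hence $\sum_w a(w)\eta_n(w)$ bounded along paths that make $\eta_n(w)\to\infty$ at a positive-$a$ vertex forces some $a<0$ vertex to grow as well. I then iterate along \eqref{eq:irr} to derive a contradiction. This step is the delicate one.

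Once $a\ge0$ and $a\ne0$ are established, positivity follows. We have $(M+\alpha I)^k a=\alpha^k a$, and since $(M+\alpha I)^k>0$ by Lemma \ref{lem:primitive}, every entry of $a$ is strictly positive.
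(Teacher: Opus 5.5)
Your Steps 1 and 2 are correct. Your derivation of $Ma=0$ is more direct than the paper's, which goes through excursions of $Y^w$ and the ergodic theorem: averaging your one-step identity against $\pi_v(s)P_v(s,s')$ kills the $\varphi$-terms by stationarity. You are also right that $a\neq 0$ has to be assumed.

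The gap is Step 3. There you try to exclude a sign-changing null vector of $M$ by a pathwise argument that uses only \eqref{eq:only-local-reduction}, \eqref{eq:bfb} and \eqref{eq:irr}. No such argument can succeed, because under these hypotheses alone the conclusion is false.

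Take $V=\{1,2\}$ with one-point environments and deterministic instructions $\inst^{v,s}(w)=1$ for all $v,w$, so each toppling adds one particle at both vertices. All three assumptions hold for any threshold. Here $M$ is the all-ones matrix, $\alpha=0$ and $\rho=2$. Since $\eta_n(1)-\eta_n(2)$ is constant, $a=(1,-1)$ with $\varphi\equiv 0$ is a conserved quantity. The vertex with $a<0$ simply grows at the same rate as the one with $a>0$, so your iteration along \eqref{eq:irr} never produces a contradiction.

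Separately, $(-a,-\varphi)$ is conserved whenever $(a,\varphi)$ is. So deriving a contradiction from ``$\min_v a(v)\le 0$'' is impossible even in the favourable case; positivity can only hold after normalizing the sign.

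The missing ingredient is the critical hypothesis $\rho=0$. This is the setting of the lemma, and the paper's proof relies on it when it says ``$a$ is the unique right eigenvector corresponding to $\rho=0$''. Since $\rho=0$ means $r(M+\alpha I)=\alpha$, your identity $(M+\alpha I)a=\alpha a$ says precisely that $a$ is a right eigenvector for the Perron root of the primitive matrix $M+\alpha I$ (Lemma \ref{lem:primitive}). Perron--Frobenius, applied to $(M+\alpha I)^{T}$, shows this root is simple with a strictly positive eigenvector $u$. Hence $a=cu$ with $c\neq 0$, and replacing $(a,\varphi)$ by $(-a,-\varphi)$ if $c<0$ gives $a>0$.

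So the case you flag, an eigenvector for an eigenvalue $\alpha\neq r(M+\alpha I)$, is excluded by the hypothesis $\rho=0$, not by the dynamics, and your proposal never invokes that hypothesis. This also repairs your final step. Passing from $a\ge 0$ to $a>0$ via $(M+\alpha I)^k a=\alpha^k a$ needs $\alpha>0$. That holds because $\alpha=r(M+\alpha I)>0$ when $\rho=0$; in the example above $\alpha=0$.
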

\begin{proof}
Fix $w \in V$ and let $(s_0,\ldots,s_l) \in S_w^{l+1}$ be an excursion of $(Y_k^w)_{k \in \mathbb{N}}$. By choosing the initial configuration $\eta_0$ large enough at $w$, we can ensure that, with positive probability, the first $l$ topplings occur at $w$. By conservation,
$$
\sum_{v \in V} \big(a(v)\,\eta_l(v) + \varphi(v,q_l(v))\big)
= \sum_{v \in V} \big(a(v)\,\eta_0(v) + \varphi(v,q_0(v))\big).
$$
Rearranging yields $
\sum_{i=1}^l \sum_{v \in V} a(v)\,\tilde{I}_i^w(v) = 0$,
where the instructions $\tilde{I}_i^w$ are independent and $\tilde{I}_i^w \sim \inst^{w,s_i}$. Here we also used that the environment returns to its initial state. If $\tau^w$ denotes the first return time of the environment at $w$ to its initial state, then
$\sum_{i=1}^{\tau^w} \sum_{v \in V} a(v)\, I_i^w(v) = 0$,
where the instruction stack $\mathcal{I} = (I_j^v)_{v \in V,\, j \in \mathbb{N}_0}$ consists of independent instructions with $I_j^v \sim \xi^{v, Y_j^v}$. Taking conditional expectation given $\tau^w$ yields
$$
\sum_{i=1}^{\tau^w} \sum_{v \in V} a(v)\, \mu^{w, Y_i^w}(v) = 0,
$$
and by the ergodic theorem,
$$
\sum_{s \in S_w} \pi_w(s) \sum_{v \in V} a(v)\, \mu^{w,s}(v) = 0.
$$
Reordering the sums gives $M \cdot a = 0$. As in the analysis for $\mathsf{p}$, this implies $a > 0$ entrywise (here $a$ is the unique right eigenvector corresponding to $\rho = 0$).
\end{proof}
Using the entrywise positivity of $a$, we can now show that if a conserved quantity exists, then there are initial configurations that never stabilize, i.e., remain unstable indefinitely.
\begin{prop}\label{prop:conserved-crit}
If there exist a conserved quantity given by $a \in \mathbb{R}^V$ and $\varphi(v,\cdot): S_v \to \mathbb{R}$, then there is an initial state $(\eta_0, q_0) \in \mathbb{N}_0^V \times S$ such that $\eta_n$ remains unstable for all $n \in \mathbb{N}$.
\end{prop}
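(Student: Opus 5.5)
The strategy is to use the conserved quantity as a potential that cannot drop below a fixed level, while every stable configuration has potential strictly below that level.

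First bound the potential of stable states. Lemma \ref{lem:apos} gives $a(v)>0$ for all $v$, and each $S_v$ is finite, so we may set
$$\Phi_{\max}:=\sum_{v\in V}\Big(a(v)\,\thresh(v)+\max_{s\in S_v}\varphi(v,s)\Big)<\infty.$$
Any state $(\eta,q)$ with $\eta\le\thresh$ pointwise satisfies $\sum_v\big(a(v)\eta(v)+\varphi(v,q(v))\big)\le\Phi_{\max}$. This uses $a>0$ to replace $\eta(v)$ by its upper bound $\thresh(v)$.

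Next choose the initial state. Fix any $q_0\in S$ and pick $\eta_0\in\N_0^V$ with
$$\sum_{v}\big(a(v)\eta_0(v)+\varphi(v,q_0(v))\big)>\Phi_{\max}.$$
For instance, take $\eta_0=N\cdot\mathbbm{1}$ with $N$ large; this works because $a(v)>0$ for every $v$. By the definition of a conserved quantity, almost surely $\sum_v\big(a(v)\eta_n(v)+\varphi(v,q_n(v))\big)$ equals its initial value for every $n$. Hence $\eta_n\le\thresh$ fails for all $n$, so $\eta_n$ is never stable and the process never terminates. If the process ever stopped at some $(\eta_n,q_n)$, that state would be stable, and its potential would be at most $\Phi_{\max}$, a contradiction.

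Two subtleties need care, and the second is the main obstacle. First, "stable" is defined via $\eta_n(v)\le\thresh(v)$, while toppling is legal only when $\eta(v)>\thresh(v)$. The argument only needs that at each time some vertex exceeds its threshold, which is exactly the negation of stability, so the chain can always perform the next toppling and the conservation identity propagates to all $n$. Second, conservation is stated "almost surely for all initial configurations", so we obtain the conclusion almost surely, which is stronger than positive probability. I expect the main obstacle to be making sure the constant in the conservation law is taken along the actual trajectory started at $(\eta_0,q_0)$. Given the definition, that is immediate, and the rest is the elementary inequality above.
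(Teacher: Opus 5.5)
Your proof is correct and follows the paper's argument: you choose $\eta_0$ so that the conserved potential exceeds $\sum_v a(v)\thresh(v)$ plus the maximal $\varphi$-contribution, then use conservation together with $a>0$ (Lemma \ref{lem:apos}) to rule out stability at every time. Your use of a strict inequality is slightly more careful than the paper's. The paper's weak inequality only yields $\eta_n(v)\ge\thresh(v)$ for some $v$, which by itself does not exclude the stable configuration $\eta_n=\thresh$.
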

\begin{proof}
We choose $(\eta_0,q_0)$ such that  
    $$\sum_{v\in V}\Big(a(v)\eta_0(v)+\varphi(v,q_0(v))\Big)\geq\sum_{v\in V}a(v)\thresh(v)+|V|\cdot\max\{\varphi(v,s)\,|\,v\in V, s\in S_v\}.$$
Then for all $n\in\N$, we obtain
    \begin{align*}
        \sum_{v\in V}&a(v)\eta_n(v)=\sum_{v\in V}\Big(a(v)\eta_n(v)+\varphi(v,q_n(v))-\varphi(v,q_n(v))\Big)\\
        &\geq \sum_{v\in V}\Big(a(v)\eta_n(v)+\varphi(v,q_n(v))\Big)-|V|\cdot\max\{\varphi(v,s)\,|\,v\in V,s\in S_v\}\\
        &=\sum_{v\in V}\Big(a(v)\eta_0(v)+\varphi(v,q_0(v))\Big)-|V|\cdot\max\{\varphi(v,s)\,|\,v\in V,s\in S_v\}
        \geq\sum_{v\in V}a(v)\thresh(v),
    \end{align*}
which implies, in view of the positivity of $a$, that there exist $v\in V$ such that
$\eta_n(v)\geq \thresh(v)$. 
\end{proof}
To prove the converse — that non-stabilization entails the existence of conserved quantities — we first identify when a $d$-dimensional random walk visits the orthant $\mathcal{O} = \{x \in \mathbb{R}^d : x_i < 0 \text{ for all } i\}$.
The next lemma shows that any random walk that never enters $\mathcal{O}$ must be confined to a hyperplane in $\mathbb{R}^d$. Throughout, we use $\langle a, b \rangle = \sum_{i=1}^d a_i b_i$ for the inner product of $a,b \in \mathbb{R}^d$.

\begin{lemma}\label{lem:orthant}
Let $(X_i)_{i \in \mathbb{N}}$ be i.i.d. random vectors in $\mathbb{R}^d$ with mean zero and finite covariance matrix $\Sigma \in \mathbb{R}^{d \times d}$. Define the random walk $S_n = \sum_{i=1}^n X_i$. Then exactly one of the following holds:
\begin{enumerate}[label=(\roman*)]
\item\label{item:orthant} $\mathbb{P}(\exists\, n \in \mathbb{N} : S_n \in \mathcal{O}) = 1$, where $\mathcal{O} = \{x \in \mathbb{R}^d : x_i < 0 \text{ for all } i\}$.
\item\label{item:hyper} There exists $a \in \mathbb{R}^d$ with $a > 0$ such that $\langle X_1, a \rangle = 0$ almost surely.
\end{enumerate}
\end{lemma}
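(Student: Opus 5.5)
Exclusivity is immediate. If (ii) holds, then $\langle S_n,a\rangle=0$ for all $n$ almost surely. But every $x\in\mathcal{O}$ has $\langle x,a\rangle<0$ because $a>0$, so the walk never enters $\mathcal{O}$ and (i) fails. It therefore suffices to show that if (ii) fails then (i) holds.

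The plan is to work with the support of the law of $X_1$ through its covariance. Let $W\subseteq\mathbb{R}^d$ be the range of $\Sigma$, the smallest subspace containing $X_1$ almost surely (mean zero). First I would show that $W$ meets $\mathcal{O}$. Suppose not. Then $W$ and the open convex cone $\mathcal{O}$ are disjoint convex sets, and Hahn--Banach gives a nonzero $b$ with $\langle b,w\rangle=0$ for all $w\in W$ (since $W$ is a subspace) and $\langle b,x\rangle\le 0$ for $x\in\mathcal{O}$. The latter forces $b\ge 0$ with $b\neq 0$. This gives $\langle X_1,b\rangle=0$ almost surely, but only with $b\ge0$, not $b>0$. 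This is the main obstacle: the statement as written demands a strictly positive $a$, so the semi-positive case has to be handled. I would split off the coordinates where $b$ vanishes and argue in the later application via irreducibility, as in Lemma~\ref{lem:apos}, that such an $a$ is a right null vector of $M$ and hence strictly positive. Within the lemma itself I would try an induction on $d$: if $b$ has zero coordinates, project onto the coordinates $J=\{i:b_i=0\}$ and apply the lemma to the projected walk. If that walk reaches $\{x_J<0\}$ while the coordinates in $J^c$ are controlled via $\langle S_n,b\rangle=0$, one can combine. I expect this combination step to need care and possibly a strengthened hypothesis, so my first attempt is to prove the dichotomy with ``$a\ge0$, $a\neq0$'' and check whether strict positivity follows under the paper's irreducibility assumptions.

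Given that $W\cap\mathcal{O}\neq\emptyset$, pick $x_0\in W\cap\mathcal{O}$ and a small ball $B(x_0,\varepsilon)\cap W\subset\mathcal{O}$. Since $\mathcal{O}$ is a cone, $B(\lambda x_0,\lambda\varepsilon)\cap W\subset\mathcal{O}$ for all $\lambda>0$. By the multivariate CLT, $S_n/\sqrt n\Rightarrow N(0,\Sigma)$, a Gaussian with full support on $W$. Hence
\[
\liminf_n\mathbb{P}\big(S_n\in\sqrt n\,(B(x_0,\varepsilon)\cap W)\big)\ge \gamma:=N(0,\Sigma)(B(x_0,\varepsilon))>0,
\]
so $\mathbb{P}(S_n\in\mathcal{O}\text{ i.o.})\ge\gamma$. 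The event $\{S_n\in\mathcal{O}\text{ i.o.}\}$ is exchangeable, so by Hewitt--Savage its probability is $0$ or $1$; it is therefore $1$, giving (i). Alternatively one can avoid Hewitt--Savage with a renewal argument using the strong Markov property and the cone property of $\mathcal{O}$: starting from any point, the walk moves far into a translate of the cone with probability at least $\gamma/2$.

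The CLT and the zero-one step are routine. The genuine difficulty is the convex-separation step and upgrading the separating vector from $b\ge 0$ to $a>0$.
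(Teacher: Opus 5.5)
The obstacle you isolate is real, and it cannot be removed inside the lemma: the statement as written is false, so the induction on $d$ you sketch cannot succeed. Take $d=2$ and $X_i=(Y_i,0)$ with $Y_i$ i.i.d.\ fair $\pm1$ signs. Then $S_n$ always has second coordinate $0$, so it never enters $\mathcal{O}$ and (i) fails. Also $\langle X_1,a\rangle=a_1Y_1=0$ a.s.\ forces $a_1=0$, so (ii) fails as well. In your induction this is the case $b=(0,1)$, $J=\{1\}$: the projected walk reaches $\{x_1<0\}$ almost surely, but the remaining coordinate is frozen at $0$, so no combination step can work.

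The paper's proof has the same skeleton as yours: CLT, a lower bound on $\mathbb{P}(S_n\in\mathcal{O})$ for large $n$, then Hewitt--Savage. It passes over exactly this point by asserting that if $\Sigma a\neq0$ for every $a>0$, then the limiting Gaussian charges $\mathcal{O}$. The matrix $\Sigma=\mathrm{diag}(1,0)$ satisfies that hypothesis, yet $N(0,\Sigma)(\mathcal{O})=0$.

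Your criterion is the correct one. $N(0,\Sigma)(\mathcal{O})>0$ holds iff $\mathrm{range}(\Sigma)\cap\mathcal{O}\neq\emptyset$. By your separation argument (Gordan's alternative), this fails iff some $b\ge0$, $b\neq0$ lies in $\ker\Sigma$, i.e.\ $\langle X_1,b\rangle=0$ a.s. If you weaken (ii) to ``$a\ge0$, $a\neq0$'', your argument is a complete proof. Exclusivity survives, because $\langle x,a\rangle<0$ for every $x\in\mathcal{O}$ whenever $a\ge0$, $a\neq0$. Otherwise, (i) follows from three steps:
\begin{enumerate}
\item the portmanteau bound on an open ball $B(x_0,\varepsilon)\subset\mathcal{O}$ with $x_0\in\mathrm{range}(\Sigma)$;
\item the inequality $\mathbb{P}(\limsup_n\{S_n\in\mathcal{O}\})\ge\limsup_n\mathbb{P}(S_n\in\mathcal{O})$;
\item Hewitt--Savage.
\end{enumerate}

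So drop the induction and carry out your other plan: recover strict positivity in the application. In Proposition~\ref{prop:conserved-exists}, suppose $a\ge0$, $a\neq0$ and $\langle Z_1-Z_0,a\rangle=0$ a.s. Restrict to the positive-probability event that one step of the toppling walk consists only of topplings at $v$ during a single excursion of the environment at $v$. Taking conditional expectations gives $\sum_{i=1}^n\langle\mu^{v,s_i},a\rangle=0$ for every such excursion $(s_0,\dots,s_n)$. The cycle formula for $\pi_v$ then gives $(Ma)(v)=0$ for every $v$. Hence $(M+\alpha I)^k a=\alpha^k a$. Since $(M+\alpha I)^k>0$ entrywise for some $k$ (Lemma~\ref{lem:primitive}), the left side is entrywise positive, which forces $a>0$.
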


\begin{proof}
Note that if  \ref{item:hyper} holds, then $(S_n)_n$ is almost surely confined to the hyperplane $\{x \in \mathbb{R}^d : \langle x, a \rangle = 0\}$, which does not intersect $\mathcal{O}$. Thus, assume \ref{item:hyper} does not hold; we will then show that \ref{item:orthant} must hold. For $a\in\R^d$ we have
 $$\Sigma a=0\iff \mathbb{E}[X_1(i)\langle X_1,a\rangle]=0\text{ for all } i\in\{1,2,\dots,d\}.$$
Multiplying $\mathbb{E}[X_1(i)\,\langle X_1,a\rangle]$ by $a_i$ and summing over $i=1,\ldots,d$ yields
$\mathbb{E}\big[\langle X_1,a\rangle^2\big]=0$,
which implies that the walk is almost surely confined to the hyperplane orthogonal to $a$. Since \ref{item:hyper} does not hold, it follows that for every $a \in \mathbb{R}^d$ with $a > 0$, we have $\Sigma a \ne 0$. By the central limit theorem, the normalized sums $S_n/\sqrt{n}$ converge in distribution to a mean-zero Gaussian with covariance $\Sigma$. Since $\Sigma a \ne 0$ for every $a > 0$, this Gaussian has nondegenerate mass in all directions and hence assigns positive probability to the orthant $\mathcal{O}$. Hence, there exist $n_0 \in \mathbb{N}$ and a constant $C > 0$ such that for all $n \ge n_0$, it holds
$\mathbb{P}(S_n \in \mathcal{O}) > C$. Let
$\tau := \sup\{n \in \mathbb{N} : S_n \in \mathcal{O}\}$
denote the last time the random walk $S_n$ visits the orthant $\mathcal{O}$.
For any $k \in \mathbb{N}$ and $n \ge \max\{k, n_0\}$,
$$
C < \mathbb{P}(S_n \in \mathcal{O}) = \mathbb{P}(S_n \in \mathcal{O},\, \tau > k) \le \mathbb{P}(\tau > k),
$$
so $\mathbb{P}(\tau > k) > C$ for all $k \in \mathbb{N}$. Hence,
$
C < \mathbb{P}(\tau = \infty) = \mathbb{P}(S_n \in \mathcal{O} \text{ infinitely often})$.
Hewitt–Savage zero–one law implies
$$
\mathbb{P}(S_n \in \mathcal{O} \text{ infinitely often}) = 1,
$$
and therefore
$\mathbb{P}(\exists\, n \in \mathbb{N} : S_n \in \mathcal{O}) = 1.$
\end{proof}

\begin{prop}[Critical case]\label{prop:conserved-exists}
For $\rho = 0$, if there exists an initial configuration $\hat{\eta}_0 \in \mathbb{N}_0^V$ such that
$$
\mathbb{P}\big(\forall n \in \mathbb{N}\ \exists v \in V \text{ with } \hat{\eta}_n(v) \ge \thresh(v)\big) > 0,
$$
i.e., the stochastic network $(\eta_n)_{n\in \N}$ fails to stabilize with positive probability, then a conserved quantity exists for the stochastic network.
\end{prop}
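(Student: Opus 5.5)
We argue by contraposition, using the toppling random walk $(Z_j)_{j}$ from Section~\ref{sec:topl-rw} together with Lemma~\ref{lem:orthant}. Assume $\rho=0$. By Proposition~\ref{prop: expectation} the increments $X_j:=Z_j-Z_{j-1}$ are i.i.d.\ with mean $\E[\tau]\rho\lefteigen=0$. Their covariance is finite: by \eqref{eq:bfb} and \eqref{eq:only-local-reduction} each coordinate satisfies $|X_1(v)|\le K\tau+\sum_{i\le\tau}|I_i|$, and $\tau$ has exponential moments. For the positive parts we would assume the instruction distributions have second moments, and note this implicit integrability assumption explicitly. Lemma~\ref{lem:orthant} then gives a dichotomy. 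Either $Z_j-Z_0$ enters the open negative orthant almost surely, or there is $a>0$ with $\langle X_1,a\rangle=0$ almost surely.

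\textbf{The first alternative implies stabilization.} Suppose $Z$ enters the orthant almost surely for every start $\eta_0$. Since $Z$ is a random walk, on the event $\{Z_j - Z_0 \in \mathcal{O}\}$ we have $Z_j\le Z_0-1$ coordinatewise, because the values are integers. Iterating by the strong Markov property at successive entrance times, $Z$ eventually drops below $\eta_0 - (\eta_0-\thresh)_+ - 1$, so $\hat\eta_{\tau_j}<\thresh$ for some $j$ almost surely. Proposition~\ref{prop:illegal-implies-legal} then shows the legal dynamics $(\eta_n)$ stabilizes almost surely. This contradicts the hypothesis, so the second alternative must hold.

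\textbf{The second alternative yields the conserved quantity.} Take $a>0$ with $\langle X_1,a\rangle=0$ almost surely. The task is to build $\varphi$ so that $\Phi_n:=\sum_v a(v)\hat\eta_n(v)+\varphi(v,\hat q_n(v))$ is constant for every realization, not only along return times.

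First, since $\langle X_1,a\rangle=0$ almost surely and every finite excursion of $\hat q$ from $q_0$ with every admissible instruction sequence occurs with positive probability, $\langle\,\cdot\,,a\rangle$ applied to the total increment of every such excursion vanishes. Vary the excursions by inserting extra loops at a single vertex $w$, which is possible because the vertex order $\orderp$ is i.i.d.\ with $\lefteigen>0$. This shows that for a single vertex $w$ and any environment loop $s_0\to s_1\to\dots\to s_l=s_0$ of $P_w$ with positive probability, $\sum_{i}\langle \inst^{w,s_i},a\rangle=0$ almost surely. The same excursion-splicing argument shows that $\langle \inst^{w,s},a\rangle$ is almost surely a deterministic function $g_w(s)$ of $s$: replacing one instruction by another independent sample along the same environment path must leave the sum unchanged.

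Next, the loop sums of $g_w$ vanish along every cycle of the transition graph of $P_w$, which is irreducible. Hence $g_w$ is a \emph{coboundary}: define $\varphi(w,s)$ as minus the sum of $g_w$ along any path from a reference state to $s$. This is well defined because the cycle sums vanish. Then $\varphi(w,s')-\varphi(w,s)=-g_w(s')$ whenever $P_w(s,s')>0$.

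With this choice, a toppling at $w$ that moves the environment from $s$ to $s'$ changes $\Phi$ by $g_w(s')-g_w(s')=0$. So $\Phi$ is conserved under arbitrary, in particular legal, topplings for every initial configuration, which is exactly the definition of a conserved quantity.

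\textbf{Main obstacle.} The main obstacle is the passage from ``$\langle X_1,a\rangle=0$ almost surely at return times of the global environment'' to ``every single-vertex instruction has deterministic $a$-projection with vanishing cycle sums''. This requires a careful positive-probability splicing argument using independence of the stack and full support of $\lefteigen$. I would handle it by comparing two excursions that differ only in one inserted $w$-loop, and two excursions that differ only in one instruction sample.
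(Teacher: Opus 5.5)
Your proposal is correct and follows essentially the paper's route. Both arguments use the zero-drift toppling random walk and Lemma~\ref{lem:orthant} to obtain $a>0$, then a resampling argument to show that $\langle \inst^{v,s},a\rangle$ is deterministic, then vanishing cycle sums to build a path-independent potential $\varphi$, and finally a telescoping check. You are more careful than the paper in three places it glosses over: you iterate entrance times into the orthant to reduce the translated-threshold statement to Lemma~\ref{lem:orthant}, you flag that lemma's finite-covariance hypothesis, and you use loop insertion to get cycle sums for environment loops not based at $q_0(w)$.
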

\begin{proof}
Recall the toppling random walk $(Z_j)_{j \in \mathbb{N}}$ started at $\hat{\eta}_0$. Since $\rho = 0$, we have $\mathbb{E}[Z_1 - Z_0] = 0$, so $(Z_j)$ is a zero-drift random walk in $|V|$ dimensions. Moreover, by the assumption that the system does not stabilize from $\hat{\eta}_0$, the walk $(Z_j)$ avoids the set
$Q_t = \{x \in \mathbb{R}^V : x(v) < \thresh(v)\}$
with positive probability. By Lemma \ref{lem:orthant}, there exists a vector $a \in \mathbb{R}^V$ with strictly positive entries such that, almost surely for every $j \in \mathbb{N}$, it holds
$\sum_{v \in V} a(v)\, Z_j(v) = \text{const.}$
To find a conserved quantity, it remains to specify appropriate functions $\varphi(v,\cdot)$ for all $v \in V$.

As a first step, we show that the inner product with $a$ is invariant under the toppling operations. Fix $v \in V$ and $s \in S_v$, and consider a random toppling $\inst^{v,s}$ with law $\nu^{v,s}$. We couple two steps of the toppling walk so that the first toppling at $v$ is independent across the two copies, while all subsequent topplings are identical. Start the toppling walk from the zero configuration.
Pick an initial environment state $\sigma \in S$ with $\sigma(v) = s$. Let $(\sigma, \sigma_1, \ldots, \sigma_n)$ be an excursion of the environment (so $\sigma_n = \sigma$), and let $(v, v_1, \ldots, v_n)$ be the associated sequence of toppled vertices, assumed to have positive probability. Sample topplings $(I_j)_{j=1}^n$ with $I_j \sim \nu^{v_j,\, \sigma_j(v_j)}$, 
and sample $I_0$ and $I_0'$ independently from $\nu^{v,s}$. Define
$$
Z_1 = I_0 + \sum_{j=1}^n I_j,
\qquad
Z_1' = I_0' + \sum_{j=1}^n I_j.
$$
Then both $Z_1$ and $Z_1'$ are distributed as a single step of the toppling random walk and
$$\sum_{u\in V}a(u)I_0(u)+\sum_{j=1}^n\sum_{u\in V}a(u)I_j(u)=\sum_{u\in V}a(u)I_0'(u)+\sum_{j=1}^n\sum_{u\in V}a(u)I_j(u),$$
 almost surely, which implies that  $\sum_{u\in V}a(u)\inst^{v,s}(u)=\text{const.}$ almost surely. Next we construct the functions $\varphi(v,\cdot)$. Set
    $$\Phi(v,s)=\sum_{u\in V}a(u)\inst^{v,s}(u),$$
for all $v\in V$ and $s\in S_v$. 
From the first step, $\Phi(v,s)$ is constant almost surely, so the map
$\Phi: \bigcup_{v \in V} \big(\{v\} \times S_v\big) \to \mathbb{R}$
is deterministic. Moreover, for any excursion $(s_0,\ldots,s_n) \in S_v^{n+1}$ that occurs with positive probability for a given $v \in V$, we have
$\sum_{i=1}^n \Phi(v, s_i) = 0$,
since this sum corresponds to a single step of the toppling random walk.
Consequently, for any $v \in V$, any two states $s,s' \in S_v$, and any two paths $(s, s_1,\ldots,s_n, s')$ and $(s, d_1,\ldots,d_m, s')$ that each occur with positive probability,
$$
\sum_{j=1}^n \Big(\Phi(v,s_j)\Big)+\Phi(v,s')=\sum_{j=1}^m \Big(\Phi(v,d_j)\Big)+\Phi(v,s').
$$
The quantity 
$\lambda(v, s, s') = \sum_{j=1}^n \Big(\Phi(v,s_j)\Big)+\Phi(v,s')
$
is well-defined because it does not depend on the particular path chosen from $s$ to $s'$. For each $v \in V$, select a reference state $s_v \in S_v$ and set
$\varphi(v, s) = -\lambda(v, s_v, s).$
Let $\mathcal{I} = (I_j^v)_{v \in V,\, j \in \mathbb{N}_0}$ be a stack of toppling instructions. If $(v_1, \ldots, v_n) = \overline{v}$ denotes the sequence of vertices toppled in going from $\hat{\eta}_0$ to $\hat{\eta}_n$, then we have
    \begin{align*}
        \sum_{u\in V}a(u)\big(\hat{\eta}_n(u)-\hat{\eta}_0(u)\big)&=\sum_{u\in V}\sum_{w\in V}\sum_{i=1}^{\mathsf{m}_{\overline{v}}(w)}a(u)I_i^w(u)
        =\sum_{w\in V}\sum_{i=1}^{\mathsf{m}_{\overline{v}}(w)}\Phi(w,Y_i^w)\\
        &=\sum_{w\in V}\lambda(w,Y_0^w,Y_{\mathsf{m}_{\overline{v}}(w)}^w)
        =\sum_{w\in V}\Big(\lambda(w,s_w,Y_{\mathsf{m}_{\overline{v}}(w)}^w)-\lambda(w,s_w,Y_0^w)\Big)\\
        &=\sum_{w\in V}\Big(-\varphi(w,Y_{\mathsf{m}_{\overline{v}}(w)}^w)+\varphi(w,Y_0^w)\Big),
    \end{align*}
which completes the proof.
\end{proof}

\subsubsection*{Questions and remarks}
\vspace{-0.3cm}
\textbf{Infinite environments and infinite graphs $G$.}
In our model, each environment $S_v$ (for $v \in V$) is finite, and the chains $(Y_j^{v})_{j\in\mathbb{N}}$ are irreducible and aperiodic. A natural question is whether the result holds when the environments are infinite and the environment chains are positive recurrent. This should be a straightforward extension of the finite case. The same question can be asked when the graph $G=(V,E)$ is infinite as well.

\textbf{Driven dissipative system.}
Consider a subcritical stochastic network, or a critical one without a conserved quantity, on a finite graph $G=(V,E)$ with initial state $(\eta,q)$. Let the system stabilize, then pick a uniformly random vertex in $V$, add a particle, and stabilize again. The resulting sequence of stable configurations forms a Markov chain whose recurrent states are
$R \subset \mathbb{Z}^V \times S = \mathbb{Z}^V \times \prod_{v \in V} S_v$.
This chain admits a stationary distribution $\iota \in \mathrm{Prob}(R)$. What can be said about this stationary distribution?

\textbf{Infinite volume limit.}
Let $G_n = (V_n, E_n)$ be an increasing sequence of subgraphs exhausting an infinite graph $G = (V, E)$. Attach to each $v \in V$ an environment $S_v$ with chain $(Y_j^v)_{j \in \mathbb{N}}$, and for each $G_n$ consider the driven dissipative system with stationary distribution $\iota_n$. The question which arises is whether there exists a probability measure $\iota$ on $\mathbb{Z}^V \times \prod_{v \in V} S_v$ such that, for every configuration $(\eta, q)$ and every finite $A \subset V$,
 it holds $\iota_n\big((\eta, q)|_A\big) \xrightarrow[n \to \infty]{} \iota\big((\eta, q)|_A\big)$,
where $(\eta, q)|_A$ denotes the cylinder event that configurations (on $G_n$ or $G$) agree with $(\eta, q)$ on all vertices of $A$.

\textbf{Stabilization time.}
For a stochastic network in a Markovian environment on $G=(V,E)$ with initial state $(\eta,q)$, define the stabilization time
$T := \inf\{k \ge 0 : \eta_k \le \thresh\}.$
What can be said about $T$? For instance, if $T_n$ denotes the stabilization time on an increasing sequence of graphs $G_n=(V_n,E_n)$ exhausting an infinite graph $G=(V,E)$, with initial data $(\eta,q)|_{V_n}$ for some $(\eta,q) \in \mathbb{Z}^V \times \prod_{v \in V} S_v$, does there exist a scaling function $\Phi:\mathbb{N}\to\mathbb{N}$ and a constant $\gamma>0$ such that
$\lim_{n\to\infty} \frac{T_n}{\Phi(n)} = \gamma$.

\textbf{Funding information.} The research of M. Klötzer and E. Sava-Huss was funded in part by the Austrian Science Fund (FWF) 10.55776/PAT3123425. For open access purposes, the authors have applied a CC BY public copyright license to any author-accepted manuscript version arising from this submission.

\bibliographystyle{alpha}
\bibliography{lit}

\end{document}